\newif{\ifdraft}\drafttrue
\newif{\ifarXiv}\arXivtrue
\def\ps@pprintTitle{%
	\let\@oddhead\@empty
	\let\@evenhead\@empty
	\let\@oddfoot\@empty
	\let\@evenfoot\@oddfoot
}
\newtheorem{theorem}{Theorem}
\newtheorem{lemma}[theorem]{Lemma}
\newtheorem{proposition}[theorem]{Proposition}
\newtheorem{corollary}[theorem]{Corollary}
\newtheorem{lemma*}{Lemma}
\theoremstyle{definition}
\newtheorem{definition}[theorem]{Definition}
\newtheorem{remark}[theorem]{Remark}
\newtheorem{example}[theorem]{Example}
\newtheorem{thmy}{Theorem}
\renewenvironment{proof}[1][Proof]{
\begin{trivlist}
\item[\hskip \labelsep {\bfseries #1.}]}{\hspace*{\fill}$\Box$\end{trivlist}
}
\renewcommand{\setminus}{\mysetminus}
\newenvironment{vd}{\noindent\color{blue} VD : }{}
\newenvironment{AM}{\noindent\color{red} AM : }{}
\newenvironment{aw}{\noindent\color{magenta} AW : }{}
 \newcommand{\prref}\prettyref
\newcommand{\mysetminusD}{\raisebox{.8pt}{\hbox{\tikz{\draw[line width=0.6pt,line cap=round] (3.5pt,0pt) -- (0,5.2pt);}}}}
\newcommand{\mysetminusT}{\mysetminusD}
\newcommand{\mysetminusS}{\raisebox{.5pt}{\hbox{\tikz{\draw[line width=0.45pt,line cap=round] (2.2pt,0) -- (0,3.8pt);}}}}
\newcommand{\mysetminusSS}{\raisebox{.35pt}{\hbox{\tikz{\draw[line width=0.4pt,line cap=round] (1.5pt,0) -- (0,2.8pt);}}}}
\newcommand{\mysetminus}{\mathbin{\mathchoice{\mysetminusD}{\mysetminusT}{\mysetminusS}{\mysetminusSS}}}
\newcommand{\prob}{probability\xspace}
\newcommand{\set}[2]{\left\{#1\; \middle|\; #2\right\}}
\newcommand{\oneset}[1]{\left\{\mathinner{#1}\right\}}
\newcommand{\os}{\oneset}
\newcommand{\abs}[1]{\left|\mathinner{#1}\right|}
\newcommand{\Abs}[1]{\left\Vert\mathinner{#1}\right\Vert}
\newcommand{\ceil}[1]{\left\lceil\mathinner{#1} \right\rceil}
\newcommand{\gen}[1]{\left< \mathinner{#1} \right>}
\newcommand{\genr}[2]{\left< \, \mathinner{#1}\; \middle|\;\mathinner{#2} \, \right>}
\newcommand{\scalp}[2]{\langle {#1},\,{#2} \rangle}
\newcommand{\N}{\ensuremath{\mathbb{N}}}
\newcommand{\Z}{\ensuremath{\mathbb{Z}}}
\newcommand{\R}{\ensuremath{\mathbb{R}}}
\newcommand{\C}{\ensuremath{\mathbb{C}}}
\newcommand{\B}{\ensuremath{\mathbb{B}}}
\renewcommand{\phi}{\varphi}
\newcommand{\eps}{\varepsilon}
\newcommand{\gam}{\gamma}
\newcommand{\del}{\delta}
\newcommand{\Sig}{\Sigma}
\newcommand{\Del}{\Delta}
\newcommand{\Lam}{\Lambda}
\newcommand\GG{\Gamma}
\newcommand\OO{\Omega}
\newcommand{\Oh}{\mathcal{O}}
\newcommand{\cA}{\mathcal{A}}
\newcommand{\cL}{\mathcal{L}}
\newcommand{\cP}{\mathcal{P}}
\newcommand{\cN}{\mathcal{N}}
\newcommand{\WP}{word problem\xspace}
\newcommand{\CP}{conjugacy problem\xspace}
\newcommand{\BS}[2]{\ensuremath{\mathrm{\bf{BS}}_{#1,#2}}\xspace}
\newcommand{\BG}{\ensuremath{\mathrm{\bf{G}}_{1,2}}\xspace 
}
\newcommand{\smalloverline}[1]
{{\mspace{.8mu}\overline{\mspace{-.8mu}#1\mspace{-.8mu}}\mspace{.8mu}}}
\newcommand{\ov}[1]{\smalloverline{#1}}
\newcommand{\oi}[1]{{#1}^{-1}}
\newcommand{\oT}[1]{{#1}^{T}}
\newcommand{\lquot}[2]{{#1}\backslash {#2}}
\newcommand{\rquot}[2]{{#1} /\!\;\!{#2}}
\newcommand{\lQuot}[2]{{#1}\backslash {#2}}
\newcommand{\IFF}{if and only if\xspace}
\newcommand{\sse}{\subseteq}
\newcommand{\sm}{\setminus}
\newcommand\ei[1]{{\emph{#1}\xspace}\index{#1}}
\newcommand\ie{i.\,e., }
\newcommand\eg{e.\,g.\xspace}
\newcommand{\tar}{\tau}
\newcommand{\sor}{\iota}
\newcommand{\stl}{t}
\newcommand{\y}{t}
\begin{document}

\begin{frontmatter}
	
	

		\title{Amenability of Schreier graphs and strongly generic algorithms for the conjugacy problem} 
	
	
	\author[fmi]{ Volker Diekert}
	\author[stevens]{ Alexei G.\ Myasnikov}
	\author[stevens]{ Armin Wei\ss\footnote{This research was conducted while the third author was working at FMI, Universit{\"a}t Stuttgart, Germany.}}
	
	\address[fmi]{FMI, Universit{\"a}t Stuttgart, Germany}
	\address[stevens]{Stevens Institute of Technology, Hoboken, NJ, USA}
	
	\begin{abstract}
		In various occasions the conjugacy problem in finitely generated amalgamated products and HNN extensions can be decided efficiently for elements which cannot be conjugated into the base groups. Thus, the question arises ``how many'' such elements there are. This question can be formalized using the notion of strongly generic sets and lower bounds can be proven by applying the theory of amenable graphs:
		
		In this work we examine Schreier graphs of amalgamated products and HNN extensions. For an amalgamated product $G =H\star_AK $ with $[H:A]\geq [K:A]\geq 2$, the Schreier graph with respect to $H$ or $K$ turns out to be non-amenable if and only if $[H:A]\geq 3$. Moreover, for an HNN extension of the form $G = \left< \, H,b \;\middle| \; bab^{-1}=\phi(a), a \in A \, \right>$, we show that the Schreier graph of $G$ with respect to the subgroup $H$ is non-amenable if and only if $A\neq H \neq \phi(A)$.
		
		As application of these characterizations we show that the conjugacy problem in fundamental groups of finite graphs of groups with  finitely generated free abelian vertex groups can be solved in polynomial time on a strongly generic set. Furthermore, the conjugacy problem in groups with more than one end can be solved with a strongly generic algorithm which has essentially the same time complexity as the word problem. These are rather striking results as the word problem might be easy, but the conjugacy problem might be even undecidable. Finally, our results yield a new proof that the set where the conjugacy problem of the Baumslag group $\mathrm{\bf{G}}_{1,2}$ is decidable in polynomial time is also strongly generic.
		
	\end{abstract}
	
	\begin{keyword}
%
%
		generic case complexity \sep 
		amenability \sep 
		Schreier graph \sep 
		HNN extension \sep 
		amalgamated product \sep 
		conjugacy problem
		
		\MSC 20F65 \sep 05C81 \sep 20E06
	\end{keyword}
	
\end{frontmatter}


\section{Introduction}\label{sec:intro} 
The conjugacy problem of a group $G$ asks on input of two words $x,y$ over some set of generators whether there exists some group element $z$ such that $z x z^{-1} = y$ as equality in $G$. In recent years, conjugacy played an important role in non-commutative cryptography, see \eg~\cite{CravenJ12,GrigorievS09,SZ1}. These applications for public-key cryptosystems are based on the idea that it is easy to create elements which are conjugated, but to check whether two given elements are conjugated might be difficult. 

For cryptographic applications not only the existence of difficult instances matters, but for the generation of keys it is important to find these efficiently. Ideally, a random instance is difficult with sufficiently high probability. The complexity of random instances is studied via generic complexity, a concept introduced by Kapovich, Myasnikov, Spilrain and Schupp in \cite{KMSS1}. Some set is called (strongly) generic if the portion of words outside the set tends (exponentially fast) to zero with increasing length of the words.
Kapovich et al.\ showed that several algorithmic problems in group theory can be decided efficiently by algorithms which give an answer on generic or even strongly generic sets. In particular, the conjugacy problem can be decided generically (but not strongly generically!) in linear time as soon as the group admits an infinite cyclic quotient \cite{KMSS1}. On the other hand, there are problems for which there is no strongly generic algorithm (\eg, the halting problem \cite{MiasnikovR08}) or for which no generic polynomial time algorithm is known such as the problem of integer factorization.

The word problem of a group asks whether some word over the generators represents the identity of the group. It is a special case of the conjugacy problem since some group element is conjugate to the identity if and only if it is the identity.
However, the conjugacy problem is inherently more difficult than the word problem. Miller's group \cite{Miller1} is a famous example for that: its \WP is solvable in polynomial time (actually in logspace), but the conjugacy problem is undecidable. In \cite{BorovikMR07} Borovik, Myasnikov, and Remeslennikov showed that, nevertheless, the conjugacy problem is decidable in polynomial time on a strongly generic set. This means essentially that for ``random inputs'' conjugacy is easy to decide.

Bogopolski, Martino, and Ventura constructed another example in \cite{BogopolskiMV10} with an easy word problem but undecidable conjugacy problem: an HNN extension of $\Z^4$ with several stable letters, which is actually a $\Z^4$-by-free group. For this group the decidability of the word problem is immediate by the standard algorithm for HNN extensions, but, again, the conjugacy problem is undecidable. Our results show that the conjugacy problem is strongly generically decidable in polynomial time, see \prettyref{cor:freeamen}.

Even if the \CP is decidable, there might be a non-elementary gap between the complexities. Perhaps, one of the most striking examples so far (for a not-on-purpose construction) is the Baumslag group (or Baumslag-Gersten group) $\BG = \genr{a,t,b}{b ab^{-1}=t,t a t^{-1} = a^2}$. 
It is an HNN extension with stable letter $b$ of the structural much simpler Baumslag-Solitar group $\BS12= \genr{a,t}{t a t^{-1} = a^2}$.
The Baumslag group has a non-elementary Dehn function \cite{G1} and it was a prominent candidate for having the most difficult word problem among all one-relator groups until Myasnikov, Ushakov, and Won showed in \cite{muw11bg} that its word problem is solvable in polynomial time! However, there are strong indications that this does not transfer to the conjugacy problem for the group $\BG$. We conjectured in \cite{DiekertMW14} that the conjugacy problem for $\BG$ is non-elementary on average. Nevertheless, having a non-elementary time complexity on average does not prevent the set of ``difficult instances'' to be extremely sparse: we could use the techniques developed in \cite{muw11bg} to design a polynomial time algorithm for the conjugacy problem which works for elements which cannot be conjugated into $\BS12$. Moreover, we showed that this is a strongly generic set by deriving some explicit bounds; thus, we established a strongly generic polynomial time algorithm to solve the \CP for $\BG$ \cite{DiekertMW14}.

A crucial tool when dealing with the conjugacy problem in HNN groups is Collins' Lemma, which distinguishes two cases: the first case is that the input words can be conjugated into the base group of the HNN extension, the second is that they cannot. In the above examples (except Miller's group), the second case can be solved efficiently whereas the first case contains the difficulty. Therefore, the aim of this work is to provide bounds for the portion of inputs which fall into the second case by showing that this second case is strongly generic. 
\medskip

\noindent {\bf Outline.} We consider any finitely generated group $G$ 
which is either an amalgamated product
$G =H\star_A K$ with $H\neq A \neq K$ or an HNN extension $G = \genr{H,\y}{\y a\y^{-1}=\phi(a) \text{ for } a \in A}$ with stable letter $t$ and an isomorphism 
$\phi: A \to B$ for subgroups $A$ and $B$ of $H$.
We characterize precisely when the Schreier graph $\GG(G,P,\Sig)$ is non-amenable, 
see \prref{thm:a} and \prref{thm:h}.  

The notion of strongly generic sets is closely related to non-amenability: The Schreier graph $\GG(G,P,\Sig)$ is non-amenable if and only if the set of words which do not represent an element of the subgroup $P$ is strongly generic. Using this characterization, we derive in \prettyref{thm:one} that under certain conditions the words which cannot be conjugated into the base groups of the HNN extension (resp.\ amalgamated product) form a strongly generic set.

This yields another proof for the result of \cite{DiekertMW14} that the set of words in the Baumslag group \BG which cannot be conjugated into the subgroup \BS12 is strongly generic. We present two more applications of our results: 
 First, we show in \prettyref{cor:freeamen} that the conjugacy problem of the $\Z^4$-by-free group from \cite{BogopolskiMV10}~-- and, more generally, of all fundamental groups of finite graphs of groups with finitely generated free abelian vertex groups~-- is decidable in polynomial time on a strongly generic set.
 
 The second application is about finitely generated groups with more than one end. These groups have a characterization as amalgamated product or HNN extension over some finite subgroup. We show that in this case in a strongly generic setting the \CP has essentially the same difficulty as the \WP, see \prref{cor:infendgeneric}. At first glance, this result is quite surprising because the \WP in $G$ can be easy and the \CP can be undecidable. However, \prref{cor:infendgeneric} affirms that in practice we might spend a hard time to find difficult instances for the conjugacy problem at all. 
 
 The paper is organized as follows: after introducing some notation, we state our main results and consider their algorithmic applications in \prref{sec:results}. Then, we review the concept of amenability in \prref{sec:amenability} and present the proofs of our main results in \prref{sec:repr}.
This work is an extended version of the conference paper \cite{DiekertMW15amenability}. It contains proofs of all results and provides some additional examples and explanatory remarks. Moreover, we introduce a slightly more general statement of \prref{cor:freeamen}.
 
 \section{Notation}\label{sec:notation}
\noindent {\bf Words.} An \emph{alphabet} is a (finite) set $\Sig$; an element $a \in \Sig$ is called a \emph{letter}. 
The free monoid over $\Sig$ is denoted by $\Sig^*$, its elements
are called {\em words}. The length of a word $w$
is denoted by $\abs w$, and $\Sig^n$
forms the set of words of length $n$. The empty word is denoted by $1$. 
Let $a \in \Sig$ be a letter and $w\in \Sig^*$; the number of occurrences of $a$ in $w$ is denoted by ${\abs w}_a$. 
If $w,p,x,q$ are words such that $w = pxq$, then we call $p$ a \emph{prefix}, $x$ a \emph{factor}, and $q$ a \emph{suffix} of $w$. We also say that $w = uxv$ is a \emph{factorization}.

\medskip
\noindent {\bf Involutions.} An involution on a set $S$ is a mapping 
$x \mapsto \ov x$ such that $\ov {\ov x} = x$. If $S$ is a semigroup (in particular, if $S=\Sig^*$), then we additionally demand that $\ov{xy} = \ov y \, \ov x$. 
Usually, we consider fixed-point-free involution, \ie $x \neq \ov x$.

\medskip
\noindent {\bf Groups.} 
We consider groups $G$ together with finite sets of monoid generators $\Sig$. 
	That is there is a surjective homomorphism  $\eta:\Sig^* \to G$ (a \emph{monoid presentation}). 
	In order to keep notation simple, we usually suppress the homomorphism $\eta$ and consider words also as group elements. We write $w=_{G}w'$ as a shorthand of 
	 $\eta(w)=\eta(w')$. Thus, $w=_{G}w'$ means that $w$ and $w'$ represent the same element in the group $G$. We extend this notation and write  $ w \in_G A$ (resp.\ $L \sse_G A$) instead of $\eta(w) \in A$ (resp.\ $\eta(\Sig) \sse A$) for $A \sse G$, $w \in \Sig^*$, $L\sse \Sig^*$. 
	
If there is a fixed-point-free involution $\ov{\,\cdot\,}: \Sig \to \Sig$ satisfying $a \mapsto \ov a =_G \oi a$, then we call $\Sig$ {\em symmetric}. This means for every letter $a\in \Sig$ there is a \emph{formal inverse} $\ov a \neq a$.

If $\Lam$ generates $G$ as a group, we can add formal inverses and obtain a symmetric generating set $\Sig = \Lam \cup \ov \Lam$.

Let $\Sig$ be symmetric and let $w \in \Sig^*$.
We say that $w$ is \emph{reduced} if there is no factor $a\ov a$ for any letter $a \in \Sig$. 
It is called \emph{cyclically reduced} if $ww$ is reduced. There is a canonical bijection between the free group $F(\Lam)$ and the set of reduced words over $\Sig = \Lam \cup \ov \Lam$.
For words (or group elements) we write $x \sim_G y$ to denote conjugacy, \ie $x \sim_G y$ if and only if there exists some $z\in G$ such that $zx \oi z =_G y$.

The paper is about finitely generated groups which are either amalgamated products or HNN extensions.
A group $G$ is an amalgamated product if $$G =H\star_A K=\genr{H,K}{\phi(a)=\psi(a) \text{ for }a\in A}$$
for groups $H$ and $K$ with a common subgroup $A$ where $\phi$ and $\psi$ are the embeddings of $A$ in $H$ and $K$. An HNN extension is of the form 
$$G = \genr{H,\y}{\y a\y^{-1}=\phi(a) \text{ for } a \in A}$$ with a stable letter $t$ and an isomorphism $\phi: A \to B$ for subgroups $A$ and $B$ of $H$. 

According to the more general notion of \emph{graph of groups} \cite{serre80}, we refer to the base groups 
 $H,K$ as ``vertex groups'' and to $A$ as ``edge group''.
 Elements of $G$ which are conjugate to some element in one of the vertex groups are called {\em elliptic}, the others are called 
{\em hyperbolic}\footnote{The distinction between elliptic and hyperbolic elements stems from 
group actions on trees. The group $G$ acts naturally on a tree: its Bass-Serre tree corresponding to the splitting. The ``elliptic'' 
elements of $G$ are those which fix a vertex of the tree. These are 
in turn exactly those elements which are conjugates of elements in $H$ or $K$. The ``hyperbolic'' 
elements are those which act without fixed points.}. Thus, if $G$ is an amalgamated product, then the
set of elliptic elements is $\bigcup_{g\in G}g(H\cup K)g^{-1}$; if $G$ is an HNN extension, then the
set of elliptic elements is 
$\bigcup_{g\in G}gHg^{-1}$. With $[H:A]$ we denote the index of the subgroup $A$ in $H$.

Let $G=H\star_A K$ be an amalgamated product and let $\Sig_H$ and $\Sig_K$ be finite generating sets of $H$ and $K$. We define a finite  generating set of $G$ as
$\Sig = \Sig_H \cup \Sig_K$.
A word over $\Sig$ is called \emph{Britton reduced} if the number of alternations between letters from $H$ and $K$ is minimal among all words representing the same group element.
Britton reduced words can be computed by \emph{Britton reductions}: if there is some factor $v \in \Sig_H^*\Sig_K^*\Sig_H^*$ (resp.\ $v \in \Sig_K^*\Sig_H^*\Sig_K^*$) with $v=_Gv'$ for some $v' \in \Sig_H^*$ (resp.\ $v' \in \Sig_K^*$), replace $v$ by $v'$.

If $G=\genr{H,\y}{\y a\y^{-1}=\phi(a) \text{ for } a \in A}$ is an HNN extension and $\Sig_H$ a finite generating set of $H$, we obtain generating set of $G$ as 
 $\Sig = \Sig_H \cup \oneset{t, \ov t}$.
Now, a word over $\Sig$ is called \emph{Britton reduced} if the number of letters from $\oneset{t, \ov t}$ is minimal among all words representing the same group element.
Again, Britton reduced words can be computed  by the application of \emph{Britton reductions}: if there is some factor $w=tv\ov{t}$ or  $w=\ov{t}vt$ with $v \in \Sig_H^*$ and $w=_Gw'$ for some $w' \in \Sig_H^*$, replace $w$ by $w'$. 

In both cases a word $w$ is called \emph{cyclically Britton reduced} if $ww$ is Britton reduced.

\medskip
\noindent {\bf Graphs.} 
For the notation of graphs we follow Serre's book \cite{serre80}. 
A \ei{directed graph} $\Gamma = (V,E,\sor,\tar)$ is given by the following data: 
A set of \emph{vertices}\index{$V(\cdot)$} $V= V(\Gamma)$ and a set of \emph{edges}\index{$E(\cdot)$} $E=E(\GG)$ together with two mappings\index{$\sor(\cdot)$}\index{$\tar(\cdot)$} $\sor,\tar:E \to V$. 
The vertex $\sor(e)$ is the \emph{initial} vertex (or \emph{source}) of $e$, and $\tar(e)$ is the \emph{terminal} vertex (or \emph{target}) of $e$. 
If $\tar(e) = u$ (resp.\ $\sor(e) = u$), we call $e$ an \emph{incoming edge} (resp.\ \emph{outgoing edge}) of $u$. 
The \ei{in-degree} (resp.\ \ei{out-degree}) of a vertex is the number of incoming edges (resp.\ outgoing edges); $\Gamma$ is called \ei{locally finite} if the in-degrees and out-degrees of all vertices are finite.
If both the in-degrees and out-degrees of all vertices are equal to some constant, then $\GG$ is called \emph{regular}; and if the degree is $d$, then it is \emph{$d$-regular}. 

An \ei{undirected graph} is a directed graph $\GG$ such that the set of edges $E$ is equipped with an involution $e \mapsto \ov e$ without fixed points such that $\sor(e) = \tar(\ov e)$ for all $e \in E$. In particular, 
we have $\ov{\ov e} = e$ and $\ov e \neq e$ for all $e \in E$. 
Every undirected graph is also a directed graph by forgetting the involution.

For simplicity of notation, we often suppress the incidence functions (and involution): we mostly write $\Gamma= (V,E)$ for a (directed) graph $\GG$ knowing that the incidence functions (and involution) are implicitly part of the specification.

A \emph{path} with start point $u$ and end point $v$ is a sequence of edges $e_1, \dots, e_n$ such that $\tar(e_i) = \sor(e_{i+1})$ for all $i$ and $\sor(e_1) = u$ and $\tar(e_n)= v$. A path $e_1, \dots, e_n$ in an undirected graph is called \emph{without backtracking} if $e_i \neq \ov e_{i+1}$ for all $i$.

 \medskip
 \noindent {\bf Schreier and Cayley graphs.}
 Let $G$ be a group and $P$ be a subgroup of $G$. 
The \ei{Schreier graph} $\GG = \GG(G,P,\Sig)$ of $G$ with respect to $P$ and set of monoid generators $\Sig$ of $G$ is defined as follows: The vertex set $V(\GG)$ is the set of 
right cosets $\lQuot{P}{G}= \set{Pg}{g \in G}$ and the edge set 
 $E(\GG)$ is the set $\lQuot{P}{G} \times \Sig$ with $\sor(Pg,a)=Pg$ and $\tar(Pg,a) = Pga$.
 For $\abs \Sig = d$ it is a $d$-regular directed graph. 
 If $\Sig$ is symmetric, then 
 $\GG(G,P,\Sig)$ is an undirected graph thanks to the involution 
 $\ov{(Pg,a)} = (Pg a,\ov a)$. 
 
 If $P$ is the trivial group, then $\GG(G,P,\Sig)$ is the \emph{Cayley graph} of $G$.
For an arbitrary subgroup $P$ there is a natural action of $P$ on the Cayley graph $\GG(G,\{1\},\Sig)$. The Schreier graph $\GG(G,P,\Sig)$ coincides with the quotient graph $\lquot{P}{\GG(G,\{1\},\Sig)}$.

Fix some starting point $Pg \in \lquot{P}{G}$. Now, every word $w = a_1 \cdots a_n \in \Sig^*$ defines a unique path $(Pg,a_1), (Pga_1,a_2),\dots, (Pga_1\cdots a_{n-1}, a_n)$ which starts in $Pg$ and follows the edges corresponding to the respective letters. 
 Conversely, every path starting at $Pg$ defines a unique word $w \in \Sig^*$.
 Moreover, some word represents an element in $P$ if and only if, when starting in $P \in \lquot{P}{G}$, the unique path described by $w$ also ends in $P$. 
 
 Likewise, there is a canonical bijection between reduced words and paths without backtracking starting at some fixed vertex.

 \medskip
\noindent {\bf Strongly generic algorithms.}
In algorithmic problems the inputs are taken from some 
specific domain $D$; in most cases the domain 
$D$ naturally comes as disjoint union $D = \bigcup\set{D^{(n)}}{n \in \N}$ such that each $D^{(n)}$ is finite. For example, $D^{(n)}$ is the set of words of length $n$ or the set of reduced words of length $n$ or the set of integers having a binary representation with $n$ bits, etc. 
A set $N\sse D$ is called \emph{strongly negligible} 
if
$$\frac{\abs {N\cap D^{(n)}}}{ \abs {D^{(n)}}} \in 2^{-\OO(n)}.$$
A set $L\sse D$ is called \emph{strongly generic} if its complement 
$D\sm L$ is strongly negligible.
Thus, as soon as $L$ is strongly generic,
if a random process chooses an element uniformly among 
all elements from $D^{(n)}$, then, for all practical purposes, we can ignore with increasing $n$ the event that it finds an element outside $L$. 
A problem $\cP$ is solved by a {\em strongly generic algorithm} $\cA$
if there is a strongly generic set $L$ such that the following three conditions hold: 
\begin{enumerate}
\item $\cA$ solves $\cP$ correctly on all inputs from $L$.
\item $\cA$ may refuse to give an answer or it might not terminate, but only on inputs outside $L$.
\item If $\cA$ gives an answer, then the answer \emph{must} be correct.
\end{enumerate}
The (time) complexity is the worst case behavior measured only on elements of $L$: for inputs in $L$ we count the maximal number of steps until the algorithm stops with the correct answer.

\section{Results}\label{sec:results}
Let $G$ be a finitely generated group and $\eta:\Sig^* \to G$ be a finite monoid presentation. In case that $\Sig$ is symmetric, 
let $\Del$ denote the subset of cyclically reduced words in $\Sig^*$.

\begin{thmy}\label{thm:one}\label{THMONE}
Let \begin{itemize}
\item $G =H\star_A K$ be an amalgamated product such that $[H:A]\geq 3$ and $[K:A] \geq 2$, or let
\item $G = \genr{H,\y}{\y a\y^{-1}=\phi(a) \text{ for } a \in A}$ be an HNN extension with $[H:A]\geq 2$ and $[H:\phi(A)] \geq 2$.
\end{itemize} 
Then the following holds:
\begin{enumerate}
 \item The set of words representing hyperbolic elements in $G$ is strongly generic in $\Sig^*$. \label{gia}
 \item If $\Sig$ is symmetric, then the set of cyclically reduced 
 words representing hyperbolic elements in $G$ is strongly generic in $\Del$, too. \label{gib} 
 \end{enumerate}
 \end{thmy}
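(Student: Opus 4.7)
The plan is to combine the Schreier-graph characterizations \prref{thm:a} and \prref{thm:h} (to be proved in \prref{sec:repr}) with the equivalence (to be recalled in \prref{sec:amenability}) between non-amenability of $\GG(G,P,\Sig)$ and strong genericity in $\Sig^*$ of the complement of $\eta^{-1}(P)$. Under the hypotheses of the theorem these characterizations give non-amenability of $\GG(G,H,\Sig)$ (and, in the amalgamated case, of $\GG(G,K,\Sig)$ as well), so the set of length-$n$ words representing an element of a vertex group is strongly negligible. This accounts for the elliptic elements that already lie inside a vertex group; the substantive step is to upgrade this to words representing \emph{any} conjugate of a vertex-group element.

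The bridge is the Britton normal form together with Collins' lemma: $w$ is elliptic iff its cyclic Britton reduction has syllable length zero (no $t^{\pm 1}$'s in the HNN case, no alternation between $\Sig_H$- and $\Sig_K$-blocks in the amalgamated case), so it suffices to show that the cyclic syllable length is at least one for strongly generic $w\in\Sig^n$. My approach is to prove the stronger assertion that the \emph{non-cyclic} Britton syllable length of $w$ is $\Omega(n)$ generically, and then to control the loss under cyclic reduction. For the first point I stratify $G$ by syllable length: the set $G_{\le k}$ of elements of Britton syllable length $\le k$ decomposes into at most $c^k$ double-coset classes of the form $H\sigma_1H\cdots\sigma_{k'}H$ with $k'\le k$, each of which is a finite union of at most $d^k$ right $H$-cosets, where $d$ is the Bass--Serre branching constant (essentially $[H\!:\!A]+[H\!:\!\phi(A)]$ in the HNN case). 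The Kesten spectral bound for the Schreier walk on $\GG(G,H,\Sig)$ gives $p^{(n)}_{H,v}\le\rho^n$ for some $\rho<1$ uniformly in the target coset $v$, so the number of length-$n$ words landing in any fixed class is at most $(cd)^k\rho^n\abs\Sig^n$. Choosing $k=\lfloor\eps n\rfloor$ with $\eps>0$ small enough that $(cd)^\eps\rho<1$ makes the event ``Britton syllable length $\le\eps n$'' strongly negligible.

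The second step---that cyclic reduction loses only $O(1)$ syllables generically---is the main obstacle. Each cyclic-reduction move requires a specific algebraic match between the leading and trailing syllables of the Britton-reduced word, and a careful combinatorial analysis of such boundary configurations, together with the Kesten bound applied once more to the ``end'' of the word, shows that iterated boundary cancellations happen at a geometrically decaying rate. For part~(ii), the same stratification can be run inside $\Del$: the set $\Del\cap\Sig^n$ has exponential growth rate comparable to $\abs\Sig^n$ under the hypotheses (standard reduced-word growth estimate), so strong negligibility transfers once we verify that the Kesten-type return bound survives the restriction to cyclically reduced words---this requires a cogrowth-style comparison between the Schreier walk and the induced non-backtracking walk, which is the delicate technical point where non-amenability of $\GG(G,H,\Sig)$ is used in its strongest form.
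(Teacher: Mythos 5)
Your first step (use \prref{thm:a}/\prref{thm:h} plus the random-walk characterization to make the words landing in a vertex group strongly negligible) is exactly the paper's starting point, but the upgrade from ``lies in $H$'' to ``is conjugate into a vertex group'' is where your argument breaks. The stratification bound is false in general: a double coset such as $HtH$ is a union of $[H:A]$ right $H$-cosets, and the hypotheses only require $[H:A]\geq 2$ and $[H:\phi(A)]\geq 2$, so these indices may be infinite (this is the case in the paper's motivating example, the Baumslag group $\BG$ over $\BS12$). Hence the set of elements of Britton syllable length $\le k$ is in general a union of \emph{infinitely} many right $H$-cosets, your ``branching constant $d$'' is not finite, and the estimate $(cd)^k\rho^n\abs\Sig^n$ has no content; the uniform Kesten bound per coset cannot be summed over the stratum. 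Your second step --- that cyclic Britton reduction generically destroys only $O(1)$ syllables --- is precisely the hard part and is only asserted (``a careful combinatorial analysis \dots shows''), not proved; as it stands the proposal establishes neither the linear lower bound on syllable length nor the control of boundary cancellation, so ellipticity of generic words is not ruled out.

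The paper avoids both difficulties with a softer counting argument that you may want to compare with. One labels the words of $\Sig^*$ representing elements of $H$ (strongly negligible by \prref{thm:h} together with \prref{prop:indgen}, which is also how an arbitrary, possibly non-symmetric monoid generating set is handled), then closes the labelled set under two operations of polynomially bounded multiplicity: re-cutting through a single letter $a\in\Sig$ via a fixed finite list of factorizations $u_a=p_aq_a$ with bounded substitutes $(P_a,Q_a)$, and cyclic permutation of words. Each closure multiplies the number of labelled words of a given length by at most a constant, respectively a factor $n$, so strong negligibility is preserved; Collins' Lemma then shows that \emph{every} elliptic word gets labelled, because some cyclic permutation of its rewriting over $H\cup\oneset{t,\ov t}$ represents an element of $H$ and the cut point lies inside one letter of $\Sig$. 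No quantitative statement about syllable growth or cyclic cancellation is needed. For part (ii), note also that your growth claim is wrong: $\abs{\Del\cap\Sig^n}$ is of order $d(d-1)^{n-1}$, not comparable to $\abs\Sig^n$; the paper instead compares cyclically reduced with reduced words (ratio bounded by $\frac{d-1}{d-2}$) and gets negligibility inside reduced words from the non-backtracking-walk condition, which is already condition \ref{randomwwbt} of \prref{prop:amenability} and needs no new cogrowth comparison --- but the remaining labelling argument still has to be carried out there, with bookkeeping ensuring the labelled witnesses are reduced.
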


The proof of \prref{thm:one} relies on the notion of 
an {\em amenable graph} given below in \prettyref{sec:repr} and the following two results 
 about amenable Schreier graphs.

 \begin{thmy}\label{thm:a}\label{THMA}
 Let $G =H\star_A K $ with $[H:A]\geq [K:A]\geq 2$ 
and $P \in \os {H,K}$ and let $\Sig$ be a finite symmetric set of generators. Then the Schreier graph $\GG(G,P,\Sig)$ is non-amenable if and only if $[H:A]\geq 3$.
\end{thmy}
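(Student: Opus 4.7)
The plan is to use the Bass-Serre tree $T$ of $G=H\star_A K$ as the organizing structure. Recall that $T$ is the biregular tree with vertex set $(G/H)\sqcup(G/K)$ and degrees $[H:A]$ and $[K:A]$ at the two vertex types; for $P\in\oneset{H,K}$, the map $Pg\mapsto g^{-1}P$ gives a canonical bijection between the Schreier-graph vertex set $P\backslash G$ and the type-$P$ vertices of $T$. The classical Cheeger computation on biregular trees shows that $T$ is non-amenable if and only if $([H:A]-1)([K:A]-1)>1$, which under the hypothesis $[H:A]\geq[K:A]\geq 2$ is equivalent to $[H:A]\geq 3$. The overall strategy is therefore to transfer the (non-)amenability of $T$ across this bijection to $\GG(G,P,\Sig)$.

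For the direction $[H:A]\geq 3$, I would work with Britton normal forms: each coset $Pg$ has a unique normal-form representative $\tau_1\cdots\tau_n$ whose factors alternate between nontrivial coset representatives of $A$ in $H$ and in $K$, and whose length $n$ records the tree distance from the base vertex $P$ to $g^{-1}P$. Multiplying by a single generator $a\in\Sig$ changes the normal form by at most one letter, so every $\Sig$-edge shifts the depth by $\pm 1$ or $0$, and the hypothesis $([H:A]-1)([K:A]-1)\geq 2$ ensures enough lateral branching that at every vertex a positive fraction of $\Sig$-paths of bounded length strictly increase the depth. I would then deduce the Cheeger inequality $\abs{\partial_\GG F}\geq \alpha\abs{F}$ for every finite $F\subseteq V(\GG)$ by a convex-hull-in-$T$ argument: push $F$ into $V(T)$ via the bijection, form the tree-convex hull $\hat F\subseteq V(T)$, apply the biregular-tree isoperimetric inequality to $\hat F$, and convert each tree-boundary edge incident to a vertex of $F$ back into a Schreier-edge leaving $F$.

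For the converse direction $[H:A]=[K:A]=2$, the subgroup $A$ has index $2$ in both $H$ and $K$ and is therefore normal in each; since $H\cup K$ generates $G$, $A$ is normal in $G$, and the quotient is $G/A\cong\Z_2\star\Z_2\cong D_\infty$. Because $A\subseteq P$, the right cosets satisfy $P\backslash G\cong(P/A)\backslash(G/A)\cong\Z_2\backslash D_\infty$, which is in bijection with $\Z$; a direct verification shows that the generators of $\Sig$ act on this $\Z$-indexed vertex set with bounded graph-displacement, so $\GG(G,P,\Sig)$ is quasi-isometric to $\Z$ and is therefore amenable.

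The main technical obstacle is the first direction, since $\Sig$-edges of $\GG$ do not correspond bijectively with tree-edges of $T$: translating the tree isoperimetric inequality requires a careful combinatorial accounting for the generating sets $\Sig_H,\Sig_K$ and for possible $A$-absorption during the normal-form reductions. I expect this accounting to proceed by first establishing a general non-amenability criterion for graphs equipped with a well-behaved positively drifting depth function, and then applying that criterion to $\GG(G,P,\Sig)$ equipped with the Britton-normal-form depth.
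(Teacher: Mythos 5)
Your reduction to the Bass--Serre tree identifies the right dichotomy (the $([H:A]-1)([K:A]-1)>1$ threshold), and your converse direction is essentially sound and close in spirit to the paper's \prettyref{rem:amalgamen} (minor slips: $A\backslash G$ maps to $\langle s\rangle\backslash D_\infty$, which is a ray, not $\Z$; and a single generator of an arbitrary $\Sig$ can change the normal form by a bounded amount, not by one letter). The genuine gap is in the main direction, in the step ``convert each tree-boundary edge of the hull $\hat F$ back into a Schreier-edge leaving $F$.'' The bijection $Pg\mapsto g^{-1}P$ between $V(\GG(G,P,\Sig))$ and the type-$P$ vertices of $T$ is coarsely Lipschitz only in one direction: a $\Sig$-edge moves a tree vertex a bounded tree distance, but two type-$P$ vertices at tree distance $2$ (say $gH$ and $gkH$ with $k\in K\setminus A$) are at Schreier distance equal to the minimal $\Sig$-length of an element of the coset $gHk^{-1}g^{-1}$, which is unbounded as soon as $[K:A]$ is infinite (distinct $k$-cosets give distinct, eventually long, cosets), and in general whenever the vertex and edge groups are infinite. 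So the identification is not a quasi-isometry, the tree need not even be locally finite, and a tree-boundary edge of $\hat F$ at a vertex of $F$ gives no Schreier edge (nor even a boundedly short Schreier path) leaving $F$; boundary edges at the hull vertices in $\hat F\setminus F$ (including all type-$K$ vertices) correspond to nothing in $\GG$ at all. Note also that the paper's own \prettyref{ex:fehler} shows that even a quasi-isometry does not transport the relevant conditions without a degree bound, so the transfer principle you are implicitly using has no safe general form to appeal to.

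The fallback you sketch, ``a general non-amenability criterion for graphs with a positively drifting depth function,'' is where the real content would have to go, and positive drift alone cannot suffice: in the excluded case $[H:A]=[K:A]=2$ the same Britton depth is defined, one can always move deeper, and yet the Schreier graph is amenable (a folded line, return probability $\Theta(n^{-1/2})$). What is needed is a \emph{bounded-displacement branching} statement, and that is exactly how the paper argues: using the normal forms of \prettyref{lem:amalgNF}, fix $c\neq c'$ in $C\setminus\os{1}$ (this is where $[H:A]\geq 3$ enters) and $d\in D\setminus\os{1}$, and define $f(Pvcd)=f(Pvc'd)=Pv$ (similarly for $vdc,vdc'$), the identity elsewhere. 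Since $c,c',d$ are \emph{fixed} group elements, each expressible by at most $k$ letters of the given $\Sig$, the map $f$ has displacement at most $2k$ in the Schreier metric and every fibre has size at least $2$, i.e.\ Gromov's condition \prettyref{prop:amenability}(i) holds directly for $\GG(G,P,\Sig)$, with no passage through the tree's isoperimetry. If you want to salvage your outline, you should replace the convex-hull transfer by such an explicit bounded-displacement, two-to-one ``parent'' map (or prove your drift criterion with an explicit branching hypothesis realized by fixed group elements); as it stands, the isoperimetric transfer step would fail whenever $A$ has infinite index or the groups involved are infinite.
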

\begin{thmy}\label{thm:h}\label{THMH}
Let $G= \genr{H,\stl}{\stl a\stl^{-1}=\phi(a) \text{ for } a \in A}$ be an HNN extension
and let $\Sig$ be a finite symmetric set of generators of $G$. Then the Schreier graph $\GG(G,H,\Sig)$ is non-amenable if and only if both $[H:A]\geq 2$ and $[H:\phi(A)]\geq 2$.
\end{thmy}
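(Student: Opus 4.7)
The plan is to prove both implications separately, using the Cheeger-type characterization of graph (non-)amenability developed in \prref{sec:amenability} together with Britton-reduced normal forms.

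For the amenable direction, suppose without loss of generality that $[H:A]=1$, so $A=H$ (the case $[H:\phi(A)]=1$ is entirely symmetric). Then $tHt^{-1}=\phi(A)\subseteq H$, so the cosets $H, Ht, Ht^2,\ldots$ are pairwise distinct and at each of them every $\Sigma_H$-generator acts as a self-loop (because $t^kHt^{-k}\subseteq H$ for all $k\geq 0$). Taking $F_n=\{H, Ht,\ldots, Ht^n\}$ produces a F\o{}lner sequence for $\Gamma$, since the only boundary edges of $F_n$ are the $t^{-1}$-edge out of $H$ and the $t$-edge out of $Ht^n$, so $|\partial F_n|\leq 2$ while $|F_n|=n+1\to\infty$.

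For the non-amenable direction, assume both $[H:A]\geq 2$ and $[H:\phi(A)]\geq 2$. I would decompose $V(\Gamma)=H\backslash G$ into right $H$-orbits, which I call clusters and which correspond bijectively to double cosets in $H\backslash G/H$: the $\Sigma_H$-edges of $\Gamma$ stay inside clusters, and the $t^{\pm}$-edges link (not necessarily distinct) clusters. Britton's lemma endows $V(\Gamma)$ with a length function $\ell$, counting $t^{\pm}$-syllables in the shortest representative, constant on clusters and changing by $\pm 1$ across $t^{\pm}$-edges. The key quantitative inputs are: (i) every cluster with $\ell>0$ has size at least two, because the stabilizer $H\cap g^{-1}Hg$ of any coset $Hg$ with $g\notin H$ is a proper subgroup of $H$---at depth one it equals a conjugate of $A$ or $\phi(A)$, and a subgroup-intersection recursion on the length propagates the bound to arbitrary depth; and (ii) since $\Sigma_H$ generates $H$ while $H\neq A,\phi(A)$, within every non-root cluster $C$ there is at least one $\Sigma_H$-edge from the distinguished ``base vertex'' $b_C$ (the unique vertex of $C$ whose up-oriented $t^{\pm}$-edge leaves $C$) to a distinct vertex in $C$.

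Given this setup, I would run a Cheeger argument on an arbitrary finite $F\subseteq V(\Gamma)$. Orienting each $t^{\pm}$-edge from its smaller-$\ell$ endpoint to its larger-$\ell$ one, base vertices have one incoming and one outgoing edge and all other vertices have no incoming and two outgoing (the root having two outgoing); an in/out balance then gives $|\partial F|\geq 2(|F|-b)$, where $b$ counts base vertices of $F$, which already yields $|\partial F|\geq |F|$ when $b\leq |F|/2$. In the remaining case $b>|F|/2$, the base vertices of $F$ witness more than $|F|/2$ distinct clusters meeting $F$; each such cluster, by~(i), has $2|C|-1\geq 3$ outgoing inter-cluster edges, most of which exit $F$, and by~(ii) the base vertex also has a $\Sigma_H$-neighbor in $C\setminus\{b_C\}$, either providing a direct $\Sigma_H$-boundary edge or a route via which the inter-cluster outgoing edges of a non-base vertex of $C$ produce further boundary edges. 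The main obstacle is to carry out this charging so that each $\Gamma$-boundary edge is charged at most boundedly often, yielding a uniform lower bound $|\partial F|\geq c|F|$ with $c>0$.
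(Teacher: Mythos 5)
Your amenable half is essentially correct and in fact a little slicker than the paper's: when $A=H$ one has $t^kHt^{-k}\subseteq H$ for all $k\geq 0$, so along the ray $H, Ht, Ht^2,\dots$ all generators from $H$ act as self-loops and $F_n=\os{H,Ht,\dots,Ht^n}$ violates the doubling condition of \prref{prop:amenability}; the paper instead bounds the return probability from below by a Catalan-number count in \prref{rem:hnnamen}. Two small repairs are needed: you argue with a generating set of the special form $\Sig_H\cup\os{t,\ov t}$, so you must invoke \prref{cor:indgen} (or the invariance of the Gromov condition) to cover an arbitrary finite symmetric $\Sig$, and you should say explicitly that the case $\phi(A)=H$ follows by exchanging $t$ and $\ov t$.

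The non-amenable half, however, has a genuine gap. The structural claim on which your Cheeger argument pivots~-- that every cluster (double coset $HgH$) of positive level has a \emph{unique} base vertex, i.e.\ a unique vertex whose $t^{\pm}$-edge descends to the previous level~-- is false. The vertices of the cluster of $Hg$ are the cosets of $H\cap g^{-1}Hg$ in $H$, and (when the reduced form of $g$ ends in $t$) the ones carrying a descending edge are exactly those coming from $A$; their number is $[A:H\cap g^{-1}Hg]$, which can be arbitrarily large or infinite. Concretely, take $H=F(a,b)$ free of rank two, $A=\gen{a}$, $\phi(a)=b$: then $H\cap t^{-2}Ht^{2}=\os{1}$, and the cluster of $Ht^{2}$ contains the pairwise distinct base vertices $Ht^{2}a^{n}$, $n\in\Z$, each with its own descending edge to $Htb^{n}$. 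Hence in the case $b>\abs F/2$ your base vertices need \emph{not} witness more than $\abs F/2$ distinct clusters, so the size-$\geq 2$ and $\Sig_H$-edge facts (i) and (ii) cannot be fed into the charging scheme as described; and that charging is in any case left unfinished by your own admission (``the main obstacle''). So the proposal does not yet prove non-amenability. Note that the paper sidesteps all of this by verifying the Gromov condition of \prref{prop:amenability} directly: using the uniqueness of the normal forms of \prref{lem:hnnNF} and fixed nontrivial transversal elements $c\in C$, $d\in D$ (which exist precisely because $[H:A]\geq 2$ and $[H:\phi(A)]\geq 2$), one defines a bounded-displacement map on $\lquot{H}{G}$ sending both $Hw\stl c$ and $Hw\ov\stl d$ (respectively $Hw\gamma\stl$ and $Hw\gamma\ov\stl$) to $Hw$, so every vertex has at least two preimages. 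Either switch to that argument, or redo your isoperimetric estimate with a correct account of \emph{all} descending vertices in a cluster.
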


\begin{example}\label{ex:baumslag}
Let $\BS{p}{q}= \genr{a,t}{t a^pt^{-1}= a^q}$ the Baum\-slag-Solitar group with $1\leq p \leq q$. 
 Then the Schreier graph $\GG(\BS{p}{q}, \gen{a}, \os{a, \ov a, t, \ov t})$ is amenable if and only if $p= 1$. 
 
Actually, even the Cayley graph of $\BS{1}{q}$ is amenable \cite[Thm.\ 15.14]{woess2000}. 
 $\Diamond$\end{example}
\begin{example} Let $\BG= \genr{\BS12,b}{b ab^{-1}=t}$ the Baumslag group. We have shown in \cite{DiekertMW14} that the Schreier graph $\GG(\BG,\BS{1}{2}, \os{a, \ov a, b, \ov b})$ is non-amenable. This fact is now a special case of \prref{thm:h}. In \cite{DiekertMW14}, we also showed that the conjugacy problem is decidable in polynomial time for hyperbolic elements. Thus, together with \prref{thm:one}, we a obtain a strongly generic polynomial time algorithm for the conjugacy problem of $\BG$ with respect to arbitrary finite generating sets.
 $\Diamond$\end{example}

 \begin{example} Let $H_4= \genr{a_1,a_2,a_3,a_4}{a_{i+1} a_i a_{i+1}^{-1}=a_i^2 \text{ for } i\in \rquot{\Z}{4\Z}}$ be the Higman group. 
 Let
 \begin{align*}
  G_{\scriptscriptstyle 123} &= \genr{a_1,a_2,a_3}{a_{2} a_1 a_{2}^{-1}=a_1^2,\, a_{3} a_2 a_{3}^{-1}=a_2^2}, \\
  G_{\scriptscriptstyle 341} &= \genr{a_2,a_3,a_4}{a_{4} a_3 a_{4}^{-1}=a_3^2,\, a_{1} a_4 a_{1}^{-1}=a_4^2}.
 \end{align*}
Then we can write $H_4$ as amalgamated product
 $$H_4 = G_{\scriptscriptstyle 123}*_{\gen{a_1, a_3}} G_{\scriptscriptstyle 341}.$$
%
%
 By  \prref{thm:h}, the Schreier graph $\GG(H_4,G_{\scriptscriptstyle 123}, \set{a_i, \ov a_i}{i=1,\dots, 4})$ is non-amenable. We conjecture that with the same techniques as in  \cite{DiekertMW14} for the Baumslag group, the conjugacy problem can be solved in polynomial time for hyperbolic elements. By \prref{thm:one}, this would lead again to a strongly generic polynomial time algorithm for the conjugacy problem of $H_4$.
 $\Diamond$\end{example}

We postpone the proofs of \prref{thm:one}, \ref{thm:a} and \ref{thm:h} and present two more applications of these theorems. 
The first corollary shows that also the conjugacy problem of the $\Z^4$-by-free group of \cite{BogopolskiMV10} (with undecidable conjugacy problem) is strongly generically decidable in polynomial time.

\begin{corollary}\label{cor:freeamen}\label{CORFREE}
	If $G$ is a fundamental group of a finite graph of groups\footnote{For a definition we refer to \cite{serre80}.} with finitely generated free abelian vertex groups, in particular,
	\begin{itemize}
		\item  if $G =H\star_A K $ is an amalgamated product with $H$, $K$ finitely generated free abelian or
		\item  if $G= \genr{H,\stl}{\stl a\stl^{-1}=\phi(a) \text{ for } a \in A}$ is an HNN extension with $H$ finitely generated free abelian,
	\end{itemize}
	then the conjugacy problem of $G$ is decidable in polynomial time on a strongly generic set. 
\end{corollary}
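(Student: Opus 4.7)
The plan is to show (i) that the set of hyperbolic elements (those not conjugate into any vertex group of the graph of groups) is strongly generic in $G$ by applying \prettyref{thm:one} at a suitable splitting of $G$, and (ii) that the conjugacy problem restricted to hyperbolic elements is solvable in polynomial time via Collins' Lemma together with the trivial conjugacy problem in finitely generated free abelian groups.

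For (i), I would use Bass-Serre theory to split $G$ at a single edge $e$ of the underlying graph: either $G = G_1 \star_A G_2$ (if $e$ is separating) or $G = \langle G', t \mid t a t^{-1} = \phi(a),\, a \in A \rangle$ (if $e$ is non-separating), where $G_1, G_2, G'$ are fundamental groups of proper sub-graphs and $A$ is a finitely generated free abelian subgroup of an adjacent vertex group. Whenever one of $G_1, G_2, G'$ contains more structure than a single vertex group, that outer group is a non-trivial amalgam/HNN of groups, so the vertex group containing $A$ has infinite index in it, and hence so does $A$; the index hypotheses of \prettyref{thm:one} are therefore automatically satisfied. Thus the set of elements hyperbolic at this splitting is strongly generic in $G$; and since every graph-of-groups vertex group is contained in $G_1$, $G_2$, or $G'$, hyperbolicity at this outer splitting implies hyperbolicity with respect to the full graph-of-groups structure, so the set of fully hyperbolic elements is also strongly generic (as a superset of a strongly generic set).

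For (ii), I proceed by induction on the size of the graph. The word problem in $G$ reduces via iterated Britton reduction to the word problem in the vertex groups and membership testing in the edge groups; since the vertex groups are finitely generated free abelian and the edge groups are finitely generated subgroups of them, both sub-problems reduce to integer linear algebra (Hermite and Smith normal form computations) and are polynomial time. A generalization of Collins' Lemma to graphs of groups then reduces the conjugacy of two cyclically Britton reduced hyperbolic elements to a bounded number of cyclic-shift comparisons, each amounting to an equality check in a vertex group modulo a twist by an edge-group element -- again polynomial-time integer linear algebra.

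The main obstacle is the handful of degenerate configurations in which no splitting satisfies the hypotheses of \prettyref{thm:one}. By the above analysis this forces the underlying graph to consist either of two free abelian vertices joined by a single edge with $[H:A] = [K:A] = 2$, or of a single free abelian vertex with a single loop satisfying $A = H$ or $\phi(A) = H$. In the amalgam case, $A$ is normal in $G$ (since all subgroups of free abelian groups are normal), $G/A \cong \Z_2 \star \Z_2$ is infinite dihedral, and $G$ is virtually polycyclic. In the HNN case, $G$ is either $H \rtimes_\phi \Z$ (polycyclic, when $\phi \in \Aut(H)$) or a metabelian ascending HNN extension (when $\phi$ is an injective but non-surjective endomorphism of $H$). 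In all these residual groups the conjugacy problem is globally decidable in polynomial time by direct integer linear algebra, closing the remaining cases of the corollary.
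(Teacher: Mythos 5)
Your overall strategy for the generic part matches the paper's: split $G$ at an edge, observe that in a reduced graph of groups with more than one edge the edge group acquires infinite index so that \prettyref{thm:one} applies, and then decide conjugacy of hyperbolic elements via Collins' Lemma plus integer linear algebra. One caveat on that step: after fixing a cyclic permutation, deciding whether some $a$ in the edge group conjugates $t^{\eps_1}g_1\cdots t^{\eps_k}g_k$ to $t^{\eps_1}h_1\cdots t^{\eps_k}h_k$ is not ``an equality check in a vertex group modulo a twist by an edge-group element'': the conjugator propagates through every stable letter, so one must decide solvability of a coupled system $M_{\eps_i}x_i - M_{-\eps_{i+1}}x_{i+1} + g_i = h_i$ (indices cyclic) with one unknown edge-group vector per syllable, and the number of cyclic shifts is linear in the input, not bounded. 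This is still polynomial time by Frumkin and von zur Gathen--Sieveking \cite{Frumkin77,gs78}, so the gap here is only one of detail, but the system structure is the actual content of the paper's argument.

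The genuine gap is in your treatment of the residual configurations. Your classification of them (two vertices with $[H:A]=[K:A]=2$, or a loop with $A=H$ or $\phi(A)=H$) agrees with the paper, but the closing claim that in these groups ``the conjugacy problem is globally decidable in polynomial time by direct integer linear algebra'' is not substantiated and is not linear algebra. In the ascending case $A=H\neq\phi(A)$, conjugacy of elliptic elements reduces (since $H$ is abelian) to deciding whether $g=\phi^i(h)$ or $h=\phi^i(g)$ for some $i\in\N$, with $\phi$ an integer matrix and $i$ unbounded: this is the orbit problem, whose polynomial-time decidability is the nontrivial theorem of Kannan and Lipton \cite{KannanL80}, not a Diophantine linear system. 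Likewise, for $H\rtimes_\phi\Z$ the polynomial-time bound is the result of \cite{CavalloK14}, which itself goes through orbit-type questions, and appealing to ``virtually polycyclic'' in the amalgam case proves nothing, since no polynomial-time conjugacy algorithm for virtually polycyclic groups is available in general. The paper avoids these pitfalls differently: it keeps the general hyperbolic algorithm in the degenerate groups and treats only elliptic inputs separately, where Collins' Lemma plus abelianness collapses conjugacy to equality in the $[H:A]=[K:A]=2$ amalgam, to \cite{CavalloK14} when $\phi$ is an automorphism, and to the Kannan--Lipton orbit problem in the strictly ascending case. To repair your proof you must either import these two external results explicitly or give an argument replacing them; as written, the final step does not go through.
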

	
Note that by \cite[Thm.~1.4.7]{Beeker11thesis} the conjugacy problem is decidable for words representing hyperbolic elements. Hence, in the case that the requirements of \prettyref{thm:one} are met, the result follows when we replace ``decidable in polynomial time'' by ``decidable''.

Be aware that it is not sufficient to require simply that $G$ is finitely generated instead of every vertex group finitely generated. Indeed, let $L\sse \Z$ be some undecidable set. Let $$G = \genr{\!a,s,t}{[s^kas^{-k}, s^\ell as^{-\ell}] = 1\; (k,\ell \in \Z),\, ts^ias^{-i}t^{-1} = s^{i+1}as^{-i-1}\: ( i\in L)\!}$$ (where $[\,\cdot\,,\,\cdot\,]$ denotes the commutator). Then $G$ is finitely generated and it is an HNN extension of $ H = \bigoplus_{i\in \Z} \Z = \genr{a_i \text{ for  } i \in \Z}{[a_k,a_\ell]=1 \text{ for } k,\ell \in \Z}$ with two stable letters:
$G = \genr{H,s,t}{sa_is^{-1} = a_{i+1},\, ta_jt^{-1} = a_{j+1} \text{ for } i\in \Z, j \in L}$.
Moreover, it has undecidable word problem since $ts^ias^{-i}t^{-1} = s^{i+1}as^{-i-1}$ if and only if $i\in L$.

\begin{proof}[Proof of \prref{cor:freeamen}]
The proof consists of two parts: first, we give a polynomial time algorithm to decide conjugacy for hyperbolic elements. After that, we show that in the cases where \prettyref{thm:one} is not applicable, the conjugacy problem can be also decided in polynomial time for elliptic elements. As \prettyref{thm:one} establishes that the hyperbolic elements form a strongly generic set, this proves the corollary.

 \medskip
 \noindent {\bf Algorithm for hyperbolic elements.} As we do not want to introduce all the notation for graphs of groups, we first present the algorithm for the case of an HNN extension. The general case follows exactly the same way~-- we provide the details at the end based on the notation and definitions from \cite{serre80}.

The algorithm relies on the result by Frumkin \cite{Frumkin77} and von zur Gathen and Sieveking \cite{gs78} that the existence of an integer solution of a system of linear equations can be checked in polynomial time, and if there is a solution, it can be computed in polynomial time, see also \cite[Cor.\ 5.3b]{Schrijver86}.

By fixing a basis of $H$, we can write $H=\Z^n$ for some $n \in\N$. As a subgroup of a free abelian group, $A$ itself is free abelian and we can fix a basis of $A$. Thus, we have $A=\Z^m$ as an abstract group.
That means, we can represent all elements of $H$ and $A$ as vectors of binary integers (with respect to the bases of $H$ and $A$). 
Moreover, we obtain two inclusions of $A$ into $H$ (via the identity and via $\phi$), which are represented by matrices $M_{1}, M_{-1}\in \Z^{n\times m}$. (Note that as these matrices need not to be square, in particular, they need not to be invertible.) Some vector $y \in \Z^n$ represents a group element in $A$ (resp.\ $\phi(A)$) if an only if there is some $x \in \Z^m$ with $M_1x=y$ (resp.\ $M_{-1}x=y$).
Hence, the subgroup membership problem of $A$ and $\phi(A)$ in $H$ can be decided in polynomial time by the result of \cite{Frumkin77, gs78}, and also the isomorphism $\phi$ is computable in polynomial time (by multiplying the solution $x$ by $M_{-1}$ (resp.\ $M_{1}$)). This allows us to effectively calculate Britton-reduced words and cyclically Britton-reduced words. Moreover, in every application of a Britton reduction the sizes of the binary representations of the occurring numbers increase only by a constant. Hence, the algorithm of performing Britton reductions runs in polynomial time.
Now, let 
\begin{align*}
 v&= t^{\eps_1}g_1 \cdots t^{\eps_k}g_k,
 &w&= t^{\del_1}h_1 \cdots t^{\del_\ell}h_\ell 
\end{align*}
be cyclically Britton-reduced with $g_i,h_i\in H = \Z^n$ represented as vectors of binary integers for all $i$. By Collins' Lemma (see \eg~\cite{LS01}), we know that, if $v$ and $w$ are conjugate, then $k=\ell$ and we may assume $\eps_i = \del_i$ for all $i$. 
Moreover, Collins' Lemma tells us that after a cyclic permutation, we may assume that $v\sim_G w$ if and only if there is some $a \in A \cup \phi(A)$ such that
$ava^{-1}=_Gw$. 
By the normal form theorem for HNN extensions (see \eg~\cite{LS01}), it follows that this is the case if and only if there are vectors $x_1,\dots , x_k$ with $x_i \in \Z^m = A$ such that
\begin{align*}
M_{\eps_i}x_i - M_{-\eps_{i+1}}x_{i+1} + g_i &= h_i & \text{for } 1 \leq i \leq k,
\end{align*}
where indices are calculated modulo $k$ and, as above, $M_{1}$ and $M_{-1}$ are the matrices representing the inclusions $\mathrm{id}:A\to H$ and $\phi: A\to H$. This is a system of linear integer equations. Its size is linear in the input size; hence, the existence of an integer solution is decidable in polynomial time by \cite{Frumkin77, gs78}.

In order to prove the result for arbitrary graphs of groups, note that for every vertex $v$ of the graph of groups, we have $G_v= \Z^{n_v}$ for some $n_v \in \N$, and for every edge $y$, we have $G_y= \Z^{m_y}$ for some $m_y \in \N$. The inclusions $G_y \to G_{\sor(y)}$ and $G_y \to G_{\tar(y)}$ are now represented by matrices $M_y \in \Z^{n_{\sor(y)}\times m_y}$ and  $M_{ \ov y} \in \Z^{n_{\tar(y)}\times m_y}$. Britton-reduced words can be computed in polynomial time as in the HNN case. For the generalization of Collins' Lemma to arbitrary graph of groups, see \cite{Horadam81}~-- here the conjugating element $a$ comes from some edge group. The analog of the normal form theorem for HNN extensions can be found in \cite[Sec.\ I.5.2]{serre80}, where the $x_i$ come from the respective edge groups. Again, we obtain a system of linear equations, which can be solved in polynomial time by \cite{Frumkin77, gs78}.

\medskip
\noindent {\bf Algorithm for elliptic elements.}
	It remains to show that in  the cases  $G =H\star_A K $ with  $[H:A] = [K:A] = 2$ or $G = \genr{H,\y}{\y a\y^{-1}=\phi(a) \text{ for } a \in A}$ with $A = H$, the conjugacy problem is decidable in polynomial time for elliptic elements. These are indeed all cases where \prref{thm:one} cannot be applied because a fundamental group of a reduced graph of groups with at least two edges can be written as HNN extension or amalgamated product where the edge subgroup has infinite index. Thus, \prref{thm:one} can be applied. As elliptic elements of the fundamental group, in particular, are elliptic with respect to this HNN or amalgamated product decomposition, they form a negligible set.

We have already seen how to produce cyclically Britton-reduced words in polynomial time. Thus, in both cases, by Collins' Lemma, we can assume that we are given two elements $g,h \in H\cup K$ (resp.\ $g,h \in H$) as vectors of binary integers with respect to the bases we have chosen.

In the amalgamated product case, by Collins' Lemma, we know that $g\sim_G h$ if and only if there is a sequence $g_1, \dots, g_m \in H \cup K $ with $g= g_1$ and $g_m= h$ and $g_i\sim_H g_{i+1}$ or $g_i\sim_K g_{i+1}$ for all $i$.
	As $H$ and $K$ are abelian, this means $g\sim_G h$ if and only if $g=_G h$.
		
Now, assume that  $G = \genr{H,\y}{\y a\y^{-1}=\phi(a) \text{ for } a \in A}$ with $A = H$. If also $\phi(A)=H$, then we can apply a result from \cite{CavalloK14} which states that in this case the conjugacy problem can be solved in polynomial time.
If $H \neq \phi(A)$, we can apply the same tool as in \cite{CavalloK14} to solve the conjugacy problem:
	
By Collins' Lemma for HNN extensions, we have $g\sim_G h$ if and only if  either $g\sim_H h$ or there is a sequence $g_1, \dots, g_m \in A \cup \phi(A)$ with $g_i\sim_H g_{i+1}$ or $g_i = \phi^{\pm 1}(g_{i+1})$ and $g\sim_H g_1$, $g_m\sim_H h$.
	
Again, because $H$ is abelian, this reduces to a check whether there is some $i \in \N$ with $g = \phi^i(h)$ or  $h = \phi^i(g)$. The latter question it also referred to as \emph{orbit problem}. As $\phi$ is represented by an integer matrix, we can apply a result by Kannan and Lipton \cite{KannanL80} which states that the orbit problem for rational matrices is decidable in polynomial time. This concludes the proof of \prref{cor:freeamen}.
\end{proof}

The next corollary is about the conjugacy problem in groups with more than one end\footnote{A group has more than one end if its Cayley graph can be split into two infinite connected components by removing some finite set of vertices. A more thorough definition can be found \eg\ in the survey \cite{Moeller95}.}. It shows that in a strongly generic setting the conjugacy problem is essentially as difficult as the word problem. Note that we do not require anything about the conjugacy problem at all!

 \begin{corollary}\label{cor:infendgeneric}
 Let $G$ be a finitely generated group with more than one end. 
 If the word problem of $G$ is decidable in polynomial time 
 (resp.~in time $\Oh(t(n))$ with $t(n)\geq n$), then there is a strongly generic algorithm 
 which solves the conjugacy problem of $G$ in polynomial time 
 (resp.~in time $\Oh(nt(n+ \Oh(1)))$)~-- in particular, the conjugacy problem of $G$ is decidable on a strongly generic set. 
\end{corollary}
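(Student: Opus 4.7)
The plan is to invoke Stallings' theorem on ends together with \prref{thm:one} to reduce the conjugacy problem on a strongly generic set to a controlled number of word problem instances. By Stallings' theorem on finitely generated groups with more than one end, $G$ splits as a non-trivial amalgamated product $G=H\star_A K$ or as an HNN extension $G=\genr{H,t}{tat^{-1}=\phi(a),\, a\in A}$ over a \emph{finite} subgroup $A$, with finitely generated vertex groups. If $G$ has exactly two ends it is virtually cyclic; the conjugacy problem is then elementary (essentially a comparison of exponents in the cyclic part modulo a finite transversal) and can be solved in time $\Oh(t(n+\Oh(1)))$, so the entire domain $\Sig^*$ serves as the strongly generic set. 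Otherwise $G$ has infinitely many ends; the only splittings excluded by \prref{thm:one} in this setup are $[H{:}A]=[K{:}A]=2$ (amalgamated) and $A=H=\phi(A)$ (HNN), both of which~-- since $A$ is finite~-- would force $G$ to be virtually cyclic, contradicting the assumption. Hence the index hypotheses of \prref{thm:one} are automatic, and the set $L$ of words representing hyperbolic elements is strongly generic.

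Next I would describe the algorithm for inputs $u,v\in L$. First change to a generating set $\Sig_H$ (and $\Sig_K$) adapted to the splitting; each letter of the originally given generating set rewrites as a bounded-length word over the new one, which contributes only the additive $\Oh(1)$ inside $t(n+\Oh(1))$. Then perform Britton reduction: each step locates a factor $tw\ov t$ or $\ov twt$ (or its analog in the amalgamated case) with $w\in_G A$, and replaces it by the generator sequence of $\phi(w)\in H$. Because $A$ is finite and fixed, the test ``$w\in_G A$?'' and the lookup of $\phi(w)$ amount to $\Oh(1)$ invocations of the word problem of $G$ on words of length $\Oh(n)$, and the replacement adds only a constant number of letters. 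Since each step strictly decreases the stable-letter count, at most $\Oh(n)$ steps are performed, yielding cyclically Britton-reduced representatives $\tilde u,\tilde v$ in time $\Oh(n\cdot t(n+\Oh(1)))$. If either is elliptic, the algorithm declines to answer; otherwise, by Collins' Lemma $u\sim_G v$ holds iff some cyclic permutation $\tilde u'$ of $\tilde u$ satisfies $a\tilde u'a^{-1}=_G\tilde v$ for some $a\in A$. With $\Oh(n)$ cyclic permutations and $|A|=\Oh(1)$ potential conjugators, this final stage costs another $\Oh(n\cdot t(n+\Oh(1)))$, matching the claimed total.

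The main obstacle is the Britton-reduction phase: one has to ensure that it runs within the word-problem budget and that intermediate words do not blow up. Both points are resolved by the finiteness of $A$, which turns every subgroup-membership query into a constant-size comparison against the fixed list of elements of $A$ and keeps the word length $\Oh(n)$ throughout. Once Britton-reduced forms are in hand, Collins' Lemma converts conjugacy into a bounded number of word equalities, each decidable in time $\Oh(t(n+\Oh(1)))$ by hypothesis, so no further complexity obstruction arises, and the polynomial-time case follows by specialising $t$ to a polynomial.
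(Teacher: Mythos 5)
Your overall route is the same as the paper's: Stallings' splitting over a finite edge group; disposal of the exceptional splittings ($[H\colon A]=[K\colon A]=2$, resp.\ $A=H$), which for finite $A$ give virtually cyclic groups with easy conjugacy problem; \prref{thm:one} for strong genericity of hyperbolic words in all remaining cases; Britton reduction driven by word-problem calls (cheap because $A$ is finite and can be put inside the generating set); and Collins' Lemma reducing conjugacy of two cyclically Britton-reduced hyperbolic words to at most $n\abs{A}$ word-problem instances of length $n+\Oh(1)$. Up to the usual level of detail (bookkeeping so that the $\Oh(n)$ reduction steps cost only $\Oh(n)$ word-problem calls in total, and the fact that rewriting the input into the adapted generating set costs a multiplicative, not additive, constant in length -- harmless for the polynomial-time statement), this matches the paper's proof.

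There is, however, one point where your algorithm is genuinely weaker than what the corollary asserts: you decline to answer as soon as \emph{either} input word is elliptic. The input is a pair $(u,v)$ measured by $n=\abs{uv}$, and among such pairs those whose first component is, say, a short elliptic word (the empty word, or a single letter lying in a vertex group) already have density of order $1/n$; so the set on which your algorithm gives an answer is generic but not strongly generic, and the refusal set is not strongly negligible. The paper closes exactly this gap by exploiting that an elliptic element is never conjugate to a hyperbolic one: when exactly one of the two cyclically Britton-reduced words lies in a vertex group, the algorithm outputs ``not conjugate'', and it refuses only when \emph{both} inputs are elliptic -- an event whose probability is exponentially small in $n$ since each word is independently elliptic with exponentially small probability by \prref{thm:one}. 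Adding this one case to your algorithm repairs the argument; without it, the claimed strongly generic algorithm is not obtained.
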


\begin{proof}
 Due to Stallings' structure theorem \cite{Stallings71}, we have to consider 
two situations: either $G$ is an amalgamated product 
$G=H\star_A K$ with $H\neq A \neq K$ or 
$G$ is an HHN extension 
$G= \genr{H,\y}{\y a\y^{-1}=\phi(a) \text{ for } a \in A}$
where in both cases the edge group $A$ is finite.  

If 
\ifarXiv 
\else
we are in the case that
\fi $G=H\star_A K$ with $[H:A]= [{K:A}]= 2$ or $G=\genr{H,\y}{\y a\y^{-1}=\phi(a) \text{ for } a \in A}$) with $A=H$ (and thus $H=\phi(A)$), then the group $G$ is virtually cyclic \cite[Thm.\ IV.6.12]{DicksD89}. For 
virtually cyclic groups the conjugacy problem is very easy:
it can be solved in linear time. (In fact, all virtually free groups have conjugacy problem solvable in linear time, see \eg~\cite[Prop.\ 6.1]{ddm12}.)

Thus, we can assume that either we have $[H:A]\geq 3$ and $[K:A]\geq 2$ (if $G=H\star_A K$)
or $[H:A]\geq 2$ (if $G=\genr{H,\y}{\y a\y^{-1}=\phi(a) \text{ for } a \in A}$). Hence, by \prref{thm:one} over any finite alphabet the set of words representing hyperbolic elements is strongly generic.

We now describe an algorithm to solve the conjugacy problem which gives an answer as long as one of the input words is hyperbolic. We choose a finite symmetric generating set $\Sig$ of $G$ such that
$A\sse \Sig$ and
$\Sig \sse_G H\cup K$ (if $G=H\star_A K$) 
respectively $A, \phi(A)\sse \Sig$ and $\Sig \sse_G H\cup \oneset{t, \ov t}$ (if $G=\genr{H,\y}{\y a\y^{-1}=\phi(a) \text{ for } a \in A}$).

Let $u,v \in \Sig^*$ be input words with $\abs {uv}=n$. The question is whether $u$ and $v$ are conjugate. First, we apply Britton reductions to both words leading to Britton-reduced words $u'$ and $v'$.
 If the word problem of $G$ is decidable in time $t(n)$, we can perform Britton reductions for a word $w$ with $\abs w \leq n$ in time at most $\Oh(n t(n))$. Indeed, if we see, for example, a factor $tp\ov t$ where $p\in \Sig^*$ with $p =_Ga$ for some $a\in A$, then we can replace the factor $tp\ov t$ by $\phi(a)$, which is a word of length one. The other situations are similar.

Next, we apply a cyclic permutation to $u'$: in the HNN case, we write $u' =u'_1u'_2$ such that $\abs{\abs{u'_1}_t + \abs{u'_1}_{\ov t} - \abs{u'_2}_t- \abs{u'_2}_{\ov t}}\leq 1$ and $u'_1$ ends in $t$ or $\ov t$; then we apply Britton reductions to the word $u'_2u'_1$. Likewise, we proceed for $v'$. This leads to cyclically Britton-reduced words $u''$ and $v''$ such that $u\sim_G u''$ and $v\sim_G v''$. In the case of an amalgam, $u' =u'_1u'_2$ is factorized such that the number of alternations between the factors $H$ and $K$ in $u'_1$ and in $u'_2$ differ by at most one~-- then we proceed as for HNN extensions.

Collins' Lemma (see \eg~\cite{LS01}) tells us several things:
 $u$ is hyperbolic \IFF $u''$ does not belong to a vertex group~-- the same assertion holds for $v$ and $v''$.
By the very definition, hyperbolic elements are never conjugate to elliptic elements. Thus, if say $u$ is elliptic and $v$ is hyperbolic, then $u$ and $v$ are not conjugate. If both are elliptic, then the algorithm refuses the answer.
Thus, without restriction, henceforth $u$ and $v$ are both hyperbolic. 
In this case Collins' Lemma tells us that $u$ and $v$ are conjugate \IFF there is a cyclic permutation $u''_2u''_1$ of $u'' = u''_1u''_2$ and some $a \in A$ such that $au''_2u''_1 =_G v''a$. This can be checked with at most $n \abs{A}$ calls to the word problem (with inputs of length $n+2$).
We need time $\Oh(n t(n+\Oh(1)))$ to perform the entire algorithm. 
\end{proof}

\section{Random walks and amenability}\label{sec:amenability}
There is large body of literature on amenable groups, graphs, and 
 metric spaces as well as on different notions for random walks. In this section we review some of the known characterizations of amenability
for undirected $d$-regular graphs and the consequences for return probabilities of random walks in (directed) Schreier graphs. 
We consider $d$-regular directed graphs $\GG=(V,E)$, only. This means for each $v \in V$ there are exactly $d$ outgoing edges and 
 $d$ incoming edges. We allow self-loops and multiple edges. 
As a consequence, there are exactly $d^n$ different paths of length $n$ starting at a fixed vertex. Recall that undirected graphs are special cases of directed graphs.

\medskip
\noindent {\bf Random Walks.} 
The \ei{random walk}\index{random walk} on a directed graph 
is as follows: it starts at some vertex, chooses an outgoing edge uniformly at random and goes to the target vertex of this edge, then it chooses the next edge and so on.
With $p^{(n)}(u,v)$ we denote the probability that the random walk on $\GG$ ends after $n$ steps in $v$ when starting in $u$. Thus, 
\begin{align*}
 p^{(n)}(u,v) &= \frac{\text{number of paths from $u$ to $v$}}{d^n}.
\end{align*}

Similarly we can define the \emph{random walk without backtracking} on an undirected graph: it starts by choosing an edge starting at $u$ uniformly at random. For the following steps the inverse of the previous edge is excluded, \ie there are only $d-1$ possible choices each of which has equal probability. We obtain the following 
probability for $n \geq 1$: 
\begin{align*}
 q^{(n)}(u,v) &= \frac{\text{number of paths without backtracking from $u$ to $v$}}{d \cdot (d-1)^{n-1}}.
\end{align*}

We say that the random walk has \ei{exponentially decreasing return proba\-bi\-lity} if there are constants $c, \eps>0$ such that for all $n \in \N$ and $u,v\in V$ we have $p^{(n)}(u,v) \leq c2^{-\eps n}$.

 \medskip
 \noindent {\bf Spectral Radius.} 
 We can think of $V$ as a subset of $\R^V$ by 
 identifying a vertex 
 $u\in V$ with its characteristic function $x_u$. (Thus, $x_u(v) = 1$ if $u=v$ and $x_u(v) = 0$ otherwise.)
 We restrict our attention to the 
 Hilbert space of functions $x:V\to \R$ such that
 $\Abs{x} = \sqrt{\sum_{v\in V} x(u)^2} <\infty$. The inner product 
 is as usual $\scalp{x}{y} = \sum_{u\in V} x(u)y(v)$.
 
 The unit vectors $x_u$ span a dense vector space in the Hilbert space and
 $\GG$ defines a \emph{random walk operator} $R_\GG$ by
 letting 
 $$R_\GG(x_u) = \frac{1}{d} \sum\set{x_{\tau(e)}}{e\in E \wedge \iota(e) = u}.$$
 It is clear that $R_\GG^n(x_u)$ describes exactly the 
 \prob distribution of the random walk on $\GG$ of length $n$ starting in vertex $u$. 
 The \ei{spectral radius} $\rho(\GG)$ is defined as 
 $$\rho(\GG) = \sup\set{\scalp{x}{R_\GG(x)}}{\Abs{x}=1}.$$ 
 Let $R$ be any linear operator on the Hilbert space. Its {\em norm} $\Abs R$ is defined by
$$\Abs R = \sup\set{\sqrt{\abs{\scalp{R x}{R x }}}}{\Abs{x}=1 }.$$
 We have $\rho(\GG)\leq \Abs {R_{\GG}} \leq 1$. Indeed, $\rho(\GG)\leq \Abs {R_{\GG}}$ follows immediately by Cauchy-Schwarz. A slightly more complicated calculation shows that $ \Abs {R_{\GG}} \leq 1$, see \cite[Lem.\ I.3.12]{Soardi94}.

 \medskip
 \noindent {\bf Distance and $k$-th neighborhood.} 
 Let $\GG = (V,E)$ be an 
undirected graph. The \emph{distance} $d(u,v)$ between vertices $u$ and $v$ is defined by the length of a shortest path connecting $u$ and $v$~-- if there is such a path. Otherwise, we let $d(u,v) = \infty$.

For $k\in \N$ the $k$-th neighborhood $\cN^k(U)$ of a set of vertices $U\sse V$ is defined by 
$$\cN^k(U) = \set{v\in V}{\exists\, u \in U \, :\, d(u,v)\leq k}.$$
 
 \newcommand{\F}{U}

\medskip

\noindent {\bf Amenability and return probabilities.}
 The following proposition is well-known. For Cayley graphs the equivalence of \ref{doubling}--\ref{randomw} goes back to Kesten \cite{Kesten59_2,Kesten59_1}. The generalization to arbitrary graphs of bounded degree appeared in \cite{Gerl88}. Condition \ref{gromov} is due to Gromov (see \cite[Condition $0.5.C_{1}''$]{Gromov93}). The result \ref{randomwwbt} about random walks without backtracking was shown independently by Cohen \cite{Cohen82} and Grigorchuk \cite{Grigorchuk77} for Cayley graphs of finitely generated groups. They used the notion of \emph{cogrowth}. The generalization to arbitrary $d$-regular graphs was proven by Northshield \cite{Northshield92}. 
Proofs of the equivalence of conditions \ref{gromov}--\ref{randomw} can also be found in Thm.\ 32 and Thm.\ 51 of \cite{CeccheriniSilbersteinGH98} and Thm.\ 4.27 of \cite{Soardi94}.
 \begin{proposition}\label{prop:amenability}
 Let $\GG= (V,E)$ be a $d$-regu\-lar undirected graph. Then the following statements are equivalent:
 \begin{enumerate}
 \item $\GG$ satisfies the \emph{Gromov condition}: there exists a map $f:V \to V$ such that \label{gromov} 
 \begin{itemize}\vspace*{-1mm}
 	\item $\sup_{v\in V}d(f(v),v) < \infty$ and
 	\item $\abs{f^{-1}(v)}\geq 2$ for all $v\in V$.
 \end{itemize} 
 
 \item $\GG$ satisfies the \emph{doubling condition}: there exists some $k\in \N$ such that for every finite $\F\sse V$ we have\label{doubling}\vspace*{-1mm}
 $$\abs{\cN^k( \F)}\geq 2 \abs{\F}.$$\vspace*{-5mm}
 \item The spectral radius is less than one: $\rho(\GG)<1$.\label{specr}
 \item The random walk on $\GG$ has exponentially decreasing return proba\-bi\-lity.\label{randomw}
 \item The random walk on $\GG$ without backtracking has exponentially decreasing return proba\-bi\-lity.\label{randomwwbt}
 \end{enumerate} 
 \end{proposition}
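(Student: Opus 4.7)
My plan is to prove the five conditions equivalent by organizing them into three blocks, using the doubling condition \ref{doubling} as the pivot. The first block is the combinatorial equivalence \ref{gromov}$\Leftrightarrow$\ref{doubling}; the second is the analytic cycle \ref{doubling}$\Rightarrow$\ref{specr}$\Rightarrow$\ref{randomw}$\Rightarrow$\ref{doubling}; and the third piece is the separate transfer \ref{randomw}$\Leftrightarrow$\ref{randomwwbt} between walks with and without backtracking.

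The direction \ref{gromov}$\Rightarrow$\ref{doubling} is immediate: if $f:V\to V$ has displacement at most $R$ and $\abs{f^{-1}(v)}\geq 2$ for all $v$, then $f^{-1}(U)\subseteq \cN^R(U)$ and $\abs{f^{-1}(U)}\geq 2\abs{U}$ for every finite $U$, so $k=R$ works. For the converse \ref{doubling}$\Rightarrow$\ref{gromov} I would apply Hall's marriage theorem to the bipartite graph with left vertex set $V\times\{1,2\}$, right vertex set $V$, and edge $\{(w,i),v\}$ whenever $d(v,w)\leq k$; the doubling condition guarantees Hall's criterion for every finite subset of the form $U\times\{1,2\}$, and a standard compactness extension to locally finite bipartite graphs (using $d$-regularity and K\"onig's lemma) yields a global matching, from which the required $f$ is read off.

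For the analytic chain, \ref{doubling}$\Rightarrow$\ref{specr} is a Cheeger-type argument: the doubling condition forces a positive lower bound on the edge-isoperimetric constant of $\GG$, and a variational estimate on finitely supported vectors then yields $\rho(\GG)<1$. The step \ref{specr}$\Rightarrow$\ref{randomw} uses self-adjointness of $R_\GG$ on an undirected graph together with spectral calculus applied to the positive operator $R_\GG^2$: this gives $p^{(2n)}(u,u)\leq \rho(\GG)^{2n}$, and the Cauchy--Schwarz inequality provides a matching bound at odd $n$. The reverse implication \ref{randomw}$\Rightarrow$\ref{doubling} is by contraposition: if doubling fails at every scale $k$, then there exist finite F\o lner-type sets $U_k$ with $\abs{\cN^k(U_k)}<2\abs{U_k}$, whose normalized indicator functions serve as approximate eigenvectors pushing the numerical range of $R_\GG$ toward $1$, which in turn prevents exponential decay of $p^{(n)}(u,u)$.

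The main obstacle is the equivalence \ref{randomw}$\Leftrightarrow$\ref{randomwwbt}, which is the content of the Cohen--Grigorchuk--Northshield cogrowth theorem. I would pass to the generating functions $P(z)=\sum_n p^{(n)}(u,u)\,z^n$ and $Q(z)=\sum_n q^{(n)}(u,u)\,z^n$ and establish an Ihara-type algebraic identity expressing $Q$ as a rational substitution of $P$ depending only on $d$. Exponential decay of $p^{(n)}(u,u)$ (respectively $q^{(n)}(u,u)$) corresponds to analytic continuation of $P$ (respectively $Q$) strictly past the unit circle, and the algebraic relation transfers this analyticity in both directions. The delicate point, where the heart of the argument lies, is controlling the image of the unit disc under the rational substitution precisely enough to match the radii of convergence quantitatively, so that strict inequalities on one side translate into strict inequalities on the other.
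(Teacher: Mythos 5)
The first thing to note is that the paper does not prove this proposition at all: it is stated as a compendium of known results, with the equivalence of (ii)--(iv) attributed to Kesten and Gerl, condition (i) to Gromov, the non-backtracking criterion (v) to Cohen, Grigorchuk and Northshield, and complete proofs delegated to \cite{CeccheriniSilbersteinGH98} and \cite{Soardi94}. Your outline follows essentially the same route as that literature (Hall's theorem for (i)$\Leftrightarrow$(ii), a Cheeger-type estimate for (ii)$\Rightarrow$(iii), spectral calculus for (iii)$\Rightarrow$(iv), F\o lner sets for (iv)$\Rightarrow$(ii), and cogrowth/generating functions for (iv)$\Leftrightarrow$(v)), so as a map of the proof it is sound; the issue is that, judged as a self-contained proof, its hardest component is not actually carried out. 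For (iv)$\Leftrightarrow$(v) you reduce everything to an Ihara-type functional relation between the Green functions $P(z)$ and $Q(z)$ plus a quantitative control of how radii of convergence transform under the substitution --- and you yourself flag that this control is the ``delicate point'' you do not supply. That control \emph{is} the content of the Cohen--Grigorchuk--Northshield theorem (note also the well-known asymmetry: cogrowth strictly below $d-1$ must be shown to force cogrowth at most $\sqrt{d-1}$ or genuine spectral gap, which is exactly where naive radius-of-convergence bookkeeping can fail), so this step remains a citation in disguise rather than a proof.

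Two smaller points deserve attention, though both are repairable by standard arguments. In (iii)$\Rightarrow$(iv) you bound $p^{(2n)}(u,u)$ by powers of $\rho(\GG)$, but $\rho(\GG)$ as defined in the paper is the top of the numerical range, i.e.\ the supremum of the spectrum, while the decay rate is governed by the operator norm, which also sees the negative part of the spectrum; you need the observation that for an operator with nonnegative kernel $\abs{\scalp{R_\GG x}{x}} \leq \scalp{R_\GG \abs{x}}{\abs{x}}$, whence $\Abs{R_\GG}=\rho(\GG)$, before the spectral-calculus bound closes. In (iv)$\Rightarrow$(ii) the normalized indicator of a set $U_k$ with $\abs{\cN^k(U_k)} < 2\abs{U_k}$ need not be an approximate eigenvector: its one-step boundary can still be a definite fraction of $\abs{U_k}$, so the Rayleigh quotient of $1_{U_k}$ can stay bounded away from $1$. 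One must first pass to an intermediate neighborhood $W=\cN^i(U_k)$, $0\leq i<k$, chosen so that $\abs{\cN^1(W)}<2^{1/k}\abs{W}$ (possible since the $k$ successive ratios multiply to less than $2$), and use $1_W$ instead. With these repairs, and with the cogrowth step either proven in full or cited as the paper does, your decomposition gives a correct proof.
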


 \begin{definition}\label{def:amgr}
A graph is called \emph{non-amenable} if it satisfies one of the equivalent conditions of \prettyref{prop:amenability}. Otherwise, it is called \emph{amenable}.
 
 A finitely generated group is called \emph{non-amenable} (resp.\ \emph{amenable}) if it has a non-amenable (resp.\ amenable) Cayley graph with respect to some finite symmetric generating set. 
\end{definition}
Sometimes non-amenable graph are also called \ei{infinite expanders}  because in finite graph theory an undirected $d$-regular graph is an expander \IFF the second largest eigenvalue of its random walk matrix is strictly less than $1$. Condition~\ref{specr} plays an analogue role for infinite graphs.

 \begin{remark}\label{rem:generic}
 Conditions \ref{randomw} and \ref{randomwwbt} of \prref{prop:amenability} establish the connection between amenability of Schreier graphs and strongly generic sets: We noticed already that there is a canonical bijection between $\Sig^*$ and the set of paths starting at the origin $P$ of the Schreier graph $\GG(G,P,\Sig)$. Moreover, some word represents a group element in $P$ if and only if the respective path ends in $P$. Thus, the random walk on $\GG(G,P,\Sig)$ has exponentially decreasing return probability if and only if the set of words representing group elements in $P$ is strongly negligible.
 
 The same observation holds for the random walk without backtracking and reduced words representing group elements in $P$.
 $\Diamond$
 \end{remark}

 The notion of amenability is originally for groups, where it was first defined via invariant means.
Using the F\o lner condition \cite{Folner55} (a slight modification of the doubling condition \ref{doubling}), it can be seen that this definition coincides with \prref{def:amgr}.
Moreover, it is well-known that amenability is a property of the group and does not depend on its symmetric generating set. This can be generalized to Schreier graphs: 
 \begin{corollary}\label{cor:indgen}
 Let $G$ be a finitely generated group and $P$ be a subgroup of $G$. Let $\Sig_{1}$ and $\Sig_{2}$ be two finite symmetric generating sets of $G$. Then, $\GG(G,P,\Sig_{1})$ is amenable if and only if $\GG(G,P,\Sig_{2})$ is amenable. 
 \end{corollary}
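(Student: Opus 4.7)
The plan is to reduce everything to the Gromov condition \ref{gromov} of \prettyref{prop:amenability}, exploiting the fact that both Schreier graphs $\GG(G,P,\Sig_{1})$ and $\GG(G,P,\Sig_{2})$ share the exact same vertex set $V = \lquot{P}{G}$; only the edge sets differ. First I would establish that the induced path-distances $d_{1}$ and $d_{2}$ on $V$ are bi-Lipschitz equivalent. Since $\Sig_{1}$ and $\Sig_{2}$ each generate $G$ as a monoid, there is a constant $C\in\N$ such that every letter of $\Sig_{2}$ equals, in $G$, some word over $\Sig_{1}$ of length at most $C$, and vice versa. Replacing each letter in a $\Sig_{2}$-path issuing from a coset $Pg$ by a corresponding $\Sig_{1}$-word produces a $\Sig_{1}$-path with the same source and target whose length grows by at most a factor of $C$; this gives $d_{1} \leq C\cdot d_{2}$ and, by the symmetric argument, $d_{2} \leq C\cdot d_{1}$.

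Assuming then that $\GG(G,P,\Sig_{1})$ is non-amenable, I would invoke the Gromov condition to obtain a map $f\colon V\to V$ with $\sup_{v\in V} d_{1}(f(v),v) < \infty$ and $\abs{f^{-1}(v)} \geq 2$ for every $v\in V$. The very same $f$ then witnesses non-amenability of $\GG(G,P,\Sig_{2})$: the fibre condition is purely set-theoretic and survives unchanged, while the displacement bound transfers verbatim thanks to the inequality $d_{2}\leq C\cdot d_{1}$. Swapping the roles of $\Sig_{1}$ and $\Sig_{2}$ yields the converse, so \prettyref{prop:amenability} forces amenability of one Schreier graph to be equivalent to amenability of the other.

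The argument is almost entirely routine; the only point requiring mild care is that distances in a Schreier graph refer to cosets in $\lquot{P}{G}$ rather than to group elements, so the metric comparison must be phrased in terms of paths through $\GG(G,P,\Sig_{i})$ and not directly via word lengths in $G$. Once this distinction is made the obstacle vanishes. An analogous transfer goes through using the doubling condition \ref{doubling} instead: any witness of doubling in $\GG(G,P,\Sig_{1})$ with neighbourhood radius $k$ yields a witness in $\GG(G,P,\Sig_{2})$ with radius $Ck$, since a $d_{1}$-ball of radius $k$ around $U$ is contained in the $d_{2}$-ball of radius $Ck$ around $U$.
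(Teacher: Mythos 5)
Your proposal is correct and is essentially the paper's argument: the paper disposes of the corollary by observing that the Gromov condition of Proposition~\ref{prop:amenability} is invariant under a change of finite generating set, which is exactly what your bi-Lipschitz comparison of the two Schreier metrics on $\lquot{P}{G}$ makes explicit. Your remark that the same map $f$ works verbatim (the fibre condition being set-theoretic and the displacement bound scaling by the constant $C$) is precisely the content of that invariance.
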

 \begin{proof}
 This is trivial consequence of \prref{prop:amenability} because the Gromov condition \ref{gromov} is invariant under the change of finite generating sets. 
 \end{proof}
 
Clearly, \prref{cor:indgen} has been known before. For example it can be found in \cite[Prop 6.2]{KMSS1} which in turn refers to \cite[Prop.\ 38]{CeccheriniSilbersteinGH98}. 
Unfortunately, \cite[Prop.\ 38]{CeccheriniSilbersteinGH98} is not correct as it is stated there. We give a counterexample in \prettyref{ex:fehler}. Still, the mistake did not affect the correctness of the proof in \cite{KMSS1} as only a special case of the statement is used. 

\begin{remark}\label{rem:quotient}
      If a group $G$ acts freely on an amenable graph $\GG$ (\ie without fixed points), then the quotient graph $\lquot{G}{\GG}$ is amenable, too. This is because every path in $\lquot{G}{\GG}$ has a unique lifting in $\GG$ from a fixed start point. 
      In particular, if the lifted path returns to the starting point, then the path in  $\lquot{G}{\GG}$ does so, too.
      Thus, if the random walk on $\GG$ does not have exponentially decreasing return probability, neither does the random walk on $\lquot{G}{\GG}$. Hence, the negation of condition \ref{randomw} in \prref{prop:amenability} transfers from $\GG$ to  $\lquot{G}{\GG}$.
      
      In particular, if $G$ is an amenable group, then the Schreier graph $\GG(G,P,\Sig)$ is amenable with respect to every subgroup $P\leq G$ and finite symmetric generating set $\Sig$.
      $\Diamond$
\end{remark}

 It is a classical fact that groups of subexponential growth are amenable, see \eg\ \cite[Thm.\ 66]{CeccheriniSilbersteinGH98}. 
 As a direct consequence, all virtually nilpotent and, in particular, all abelian groups are amenable. On the other hand, non-abelian free groups are non-amenable. This can be verified for example using the Gromov condition \ref{gromov}: The function $f$ can be defined by deleting the last letter of a reduced word and letting $f(1)$ arbitrary, see \prref{fig:freeCayley}.
 
 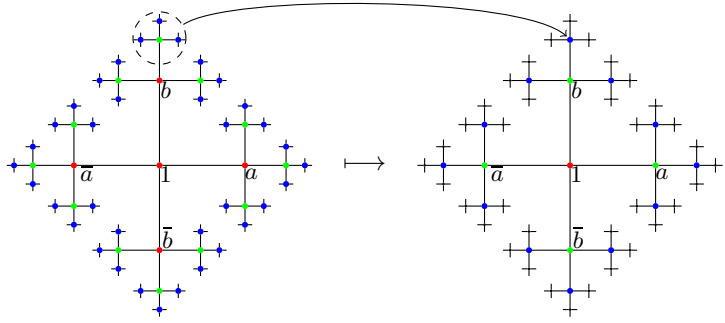
\begin{figure}
 \begin{center}
 {
 \tikzstyle{level 1}=[sibling angle=90]
 \tikzstyle{level 2}=[sibling angle=90]
 \tikzstyle{level 3}=[sibling angle=90]
 \tikzstyle{level 4}=[sibling angle=90]
 \tikzstyle{level 5}=[sibling angle=90]
 \tikzstyle{level 6}=[sibling angle=90]
 
 \tikzstyle{every node}=[ fill,inner sep=0pt]
 
 \begin{tikzpicture}[ scale=.45,grow cyclic,level distance=25mm, inner sep = 0pt, outer sep=0pt ]
 grow=left;
  \node[fill=none] (arrow) {$\longmapsto$};
  \begin{scope}[node distance = 60mm, left of = arrow]
  \draw[dashed] (0,3.75) circle (22pt);
  \draw[->]  (.7,4.15) ..controls +(1.3,.9) and +(-1.3,1.2).. (11.9,3.8);
 \begin{scope}[ rotate=45]
 {\node[circle, color=red,inner sep=.8pt] (1) {} child [-] foreach \A in {br,a,b,ar}
 	{ node[circle, color=red,inner sep=.8pt](\A){} child [-,level distance=12mm ] foreach \B in {red,green,blue}
 		{ node[circle, color=green,inner sep=.8pt] {} child [-,level distance=5.5mm ] foreach \C in {black,gray,white}
 			{ node[circle, color=blue,inner sep=.8pt] {} child [-,level distance=2.5mm ] foreach \C in {black,gray,white}
 				{ node[fill=none,inner sep=0pt,circle,outer sep=0pt] {} 
 				}
 			}
 		}
 	}
 	;}
 \small
 \path (1) -- ++(260:0.32) node[fill=none] {$1$};
 \path (a) -- ++(260:0.32) node[fill=none] {$a$};
 \path (b) -- ++(260:0.32) node[fill=none] {$b$};
 \path (ar) -- ++(283:0.44) node[fill=none] {$\ov a$};
 \path (br) -- ++(13:0.44) node[fill=none] {$\ov b$};
 
 \end{scope}
   \end{scope}

 \begin{scope}[node distance = 60mm,right of = arrow] 
 \begin{scope}[ rotate=45]

 {\node[circle, color=red,inner sep=.8pt] (1) {} child [-] foreach \A in {br,a,b,ar}
 	{ node[circle, color=green,inner sep=.8pt](\A){} child [-,level distance=12mm ] foreach \B in {red,green,blue}
 		{ node[circle, color=blue,inner sep=.8pt] {} child [-,level distance=5.5mm ] foreach \C in {black,gray,white}
 			{ node[inner sep=0pt,circle,outer sep=0pt] {} child [-,level distance=2.5mm ] foreach \C in {black,gray,white}
 				{ node[fill=none,inner sep=0pt,circle,outer sep=0pt] {} 
 				}
 			}
 		}
 	}
 	;}
 \small
 \path (1) -- ++(260:0.32) node[fill=none] {$1$};
 \path (a) -- ++(260:0.32) node[fill=none] {$a$};
 \path (b) -- ++(260:0.32) node[fill=none] {$b$};
 \path (ar) -- ++(283:0.44) node[fill=none] {$\ov a$};
 \path (br) -- ++(13:0.44) node[fill=none] {$\ov b$};
  \end{scope}
    \end{scope}
 \end{tikzpicture}
}

 	 \caption{The Cayley graph of $F({a,b})$. The function $f$ maps every vertex to its parent (one step towards the center).}\label{fig:freeCayley} \vspace{-3mm}
 	\end{center}
 \end{figure}
 
 Condition~\ref{randomw} of \prref{prop:amenability} can be defined for all directed Schreier graphs (not only undirected ones).
 However, \ref{randomw} depends on the chosen set of 
 monoid generators. Moreover, \prref{ex:directed} shows that, in general, for directed graphs conditions \ref{specr} and \ref{randomw} in \prref{prop:amenability} are not equivalent. 
 
 \begin{example}\label{ex:directed}
 Let $G= \Z$. Compare the following two directed 
 and different Cayley graphs with respect to the letters 
 $a= -1$, $\ov a = 1$, and $b=2$.
 
The Cayley graph of $G$ with 
respect to the symmetric generating set 
$\Sig= \os {a,\ov a}$ is amenable; and the random walk has a return \prob 
in $\OO(\frac{1}{\sqrt n})$ which is not exponentially decreasing.
For $\Sig'= \os {a,b}$ we obtain directed $2$-regular Cayley graph. The random walk on $\GG(G,\oneset{1},\Sig')$ has exponentially decreasing return probability. More precisely, the return \prob is at most $\binom{n}{\ceil{n/3}} 2^{-n}$. On the other hand, we have $\rho(\GG)=1$. To see this define 
 $x_n(v)=\frac{1}{\sqrt n}$ if $v \in \oneset{1, \dots, n}$ and $x_n(v)=0$ otherwise for $n\geq 1$. Then we have $\Abs{x_{n}} = 1$ and 
 $\lim\limits_{n\to \infty} \scalp{x_n}{R_\GG x_n} = 1$.
$\Diamond$ \end{example}

For a directed graph $\GG=(V,E)$, we can construct an undirected graph $\GG'=(V,E \cup \ov E)$, where $\ov E$ is a disjoint copy of $E$, and $\sor(\ov e)=\tar (e), \tar(\ov e)=\sor (e)$ for $\ov e \in \ov E$. If $\GG$ is $d$-regular, then $\GG'$ is $2d$-regular. We call $\GG'$ the \ei{undirected version} of $\GG$.
 Concerning random walks on directed graphs, we obtain \prettyref{lem:directed}, which is a special case of \cite[Thm.\ 10.6]{woess2000}. As the statement in \cite{woess2000} is slightly different, we present a proof.

 \begin{lemma}\label{lem:directed}
Let $\GG=(V,E)$ be a $d$-regular directed graph. 
If $\rho(\GG) <1$ (the spectral radius is less than $1$), then the undirected version $\GG'=(V,E \cup \ov E)$ of $\GG$ is non-amenable. 

Moreover, if $\GG'$ is non-amenable, then the random walk on $\GG$ has exponentially decreasing return probability.
 \end{lemma}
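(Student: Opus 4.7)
The plan is to compare spectral data of $R_\GG$ and $R_{\GG'}$ through the identity $R_{\GG'} = \tfrac{1}{2}(R_\GG + R_\GG^*)$, which follows directly from the definitions (in the undirected version each $e\in E$ contributes both $e$ and $\ov e$, so the random walk on $\GG'$ simply averages the forward and backward transitions of $\GG$). For every real $x\in\ell^{2}(V)$ one obtains
\[
 \scalp{x}{R_{\GG'}\,x} \;=\; \tfrac{1}{2}\bigl(\scalp{x}{R_\GG\,x} + \scalp{x}{R_\GG^*\,x}\bigr) \;=\; \scalp{x}{R_\GG\,x},
\]
so the suprema defining $\rho(\GG)$ and $\rho(\GG')$ agree. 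Hence $\rho(\GG)<1$ is equivalent to $\rho(\GG')<1$, which by condition \ref{specr} of \prref{prop:amenability} is exactly the non-amenability of $\GG'$, settling the first assertion.

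For the second assertion, assume $\GG'$ is non-amenable and set $\alpha=\rho(\GG')<1$. The heart of the argument is to show that the spectral radius $r(R_\GG)=\lim_{n\to\infty}\Abs{R_\GG^n}^{1/n}$ of the directed random walk operator satisfies $r(R_\GG)\le\alpha$. The crucial input is that $R_\GG$ has non-negative matrix entries in the basis $(x_v)_{v\in V}$. By the infinite-dimensional Perron--Frobenius theorem for bounded non-negative operators, $r(R_\GG)$ lies in the spectrum of $R_\GG$ as a \emph{real} point, so one can select a Weyl sequence of real unit vectors $v_n$ with $\scalp{v_n}{R_\GG\, v_n}\to r(R_\GG)$. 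Applying the identity from the first part to each real $v_n$ yields
\[
 \scalp{v_n}{R_\GG\, v_n} \;=\; \scalp{v_n}{R_{\GG'}\, v_n} \;\le\; \rho(\GG') \;=\; \alpha,
\]
and letting $n\to\infty$ gives $r(R_\GG)\le\alpha<1$.

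Once $r(R_\GG)<1$ is established, Gelfand's spectral radius formula supplies, for any $\mu\in(r(R_\GG),1)$, a constant $c>0$ with $\Abs{R_\GG^n}\le c\mu^n$ for every $n$. Since $p^{(n)}(u,v) = \scalp{x_v}{R_\GG^n\,x_u}\le\Abs{R_\GG^n}$, this yields the asserted exponential decay of the return probability on $\GG$. The delicate step is the Perron--Frobenius argument above: the naive estimate $\Abs{R_\GG^n}\le\Abs{R_\GG}^n$ is useless because $\Abs{R_\GG}$ may equal $1$ even when $r(R_\GG)<1$ (for instance, in the directed Cayley graph of $F_2$, where walks never return yet $\Abs{R_\GG}=1$). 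So one must genuinely reach $r(R_\GG)$ through \emph{real} approximate eigenvectors, which is precisely where the non-negativity of $R_\GG$ together with the self-adjoint operator $R_{\GG'}$ become indispensable.
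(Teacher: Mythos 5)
Your first assertion is proved essentially as in the paper: the identity $\scalp{x}{R_{\GG'}x}=\scalp{x}{\tfrac12(R_\GG+\oT{R_\GG})x}=\scalp{x}{R_\GG x}$ for real $x$ gives $\rho(\GG')=\rho(\GG)$, which is what the paper does too, merely routed through the lazy operator $P=\tfrac12(R_\GG+I)$ (needed later there) and the remark that self-loops do not affect amenability. For the second assertion you take a genuinely different route. The paper stays elementary: it observes that the graph of $\oT P P$ contains the undirected version (plus loops) as a subgraph, uses monotonicity of non-amenability under adding edges (doubling condition) to get $\rho(\oT P P)<1$, deduces $\Abs P<1$ from $\scalp{x}{\oT P P x}=\Abs{Px}^2$, and transfers the exponential decay from the lazy walk back to $\GG$ by comparing path counts; you instead bound the true spectral radius $r(R_\GG)$ by $\rho(\GG')$ using Perron--Frobenius for positive operators together with real approximate eigenvectors, and finish with Gelfand's formula. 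This works and even yields the stronger conclusion $\Abs{R_\GG^n}\le c\mu^n$, at the price of noticeably heavier functional analysis, and positivity is genuinely needed in your route (without it $\sigma(R_\GG)$ could attain its maximal modulus at a non-real point whose real part is small). Two steps are stated more tersely than they deserve: mere membership of $r(R_\GG)$ in $\sigma(R_\GG)$ does not produce a Weyl sequence (the point could lie in the residual spectrum); you need that $r(R_\GG)$, having maximal modulus, lies on the boundary of the spectrum, which is contained in the approximate point spectrum, and then that $R_\GG$ is a real operator so that real parts or imaginary parts of complex approximate eigenvectors can be normalized into real ones (alternatively, bound $\mathrm{Re}\,\lambda\le\rho(\GG')$ for all $\lambda$ in the closure of the numerical range, hence for all of $\sigma(R_\GG)$, and use positivity only to know $r(R_\GG)\in\sigma(R_\GG)\cap\R_{\ge 0}$); also, spectra and Gelfand's formula should formally be taken on the complexification of the paper's real Hilbert space. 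Finally, your parenthetical example is off: the one-sided tree of the free monoid on two letters is not $d$-regular as a directed graph (in-degrees are $0$ or $1$) and in fact satisfies $\oT{R}R=\tfrac12 I$, so $\Abs{R}=1/\sqrt2$; the relevant cautionary example is the paper's \prref{ex:directed} ($\Z$ with generators $-1,+2$), where $\Abs R=1$ and even $r(R)=1$ while the return probability still decays exponentially, which shows why some argument beyond the naive norm estimate is indeed required.
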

 
\begin{proof}
 Let $R$ be the random walk operator with $\rho(\GG) = \rho(R) < 1$. 
 Recall that we have $\rho(R) \leq \Abs R \leq 1$ for all random walk operators defined by directed regular graphs. Hence, 
for $P = 1/2(R + I)$ we obtain $\rho(P) < 1$. 
 Define $Q = 1/2(P + \oT P)$; then we have $\rho(Q) < 1$, too.
 Moreover, up to self-loops, $Q$ is the random walk operator of the undirected graph $\GG'$. 
 Self-loops do not effect the amenability. Hence, $\GG'$ is non-amenable. 
 
 Now, let $\GG'$ be non-amenable. Hence, $\rho(Q)<1$. As $P$ has self-loops on every vertex, $\oT P \cdot P$ is the random walk operator of a graph obtained from $Q$ by adding more edges. Hence, for example by the doubling condition,
this graph is non-amenable, too. This means $\rho(\oT P\cdot P) <1$. Now 
 $\scalp{x}{\oT P Px} = \scalp{Px}{Px}$ 
 shows $\Abs P < 1$. 
 Next we see that the return probability of the random walk on the graph defined by $P$ is exponentially decreasing because $p^{(n)}(u,v) = (P^nx_u)(v) \leq\Abs {P^n} \leq {\Abs P}^n \in 2^{-\OO(n)}$.
 Again, we can ignore the difference between between $R$ and $P$ because $P$ simply contains more self-loops. Hence, the random walk on $\GG$ has exponentially decreasing return probability.
\end{proof}

 Note that the converse of \prettyref{lem:directed} is not true, in general, (even not under the stronger assumption of $\GG$ being strongly connected) as we have seen in \prettyref{ex:directed}.

Next, we combine \prettyref{cor:indgen} (independence of the 
symmetric generating set) with \prettyref{lem:directed}: 
\begin{proposition}\label{prop:indgen}
 Let $G$ be a finitely generated group and $P$ be a subgroup of $G$. Let $\Sig$ be a finite symmetric generating set of $G$. If $\GG(G,P,\Sig)$ is non-amenable and $\Sig'$ is a finite monoid generating set of $G$, then the random walk on $\GG(G,P,\Sig')$ has exponentially decreasing return probability.
 \end{proposition}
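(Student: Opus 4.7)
The plan is to reduce the statement to \prref{lem:directed} via \prref{cor:indgen}. First, I would pass from the monoid generating set $\Sig'$ to its symmetrization $\Sig''=\Sig'\cup\ov{\Sig'}$, where $\ov{\Sig'}$ is a disjoint set of formal inverses with $\ov a =_{G}\oi a$. Then $\Sig''$ is a finite symmetric generating set of $G$, so by \prref{cor:indgen} the Schreier graph $\GG(G,P,\Sig'')$ is non-amenable, since $\GG(G,P,\Sig)$ is assumed to be.

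Next, I would verify that $\GG(G,P,\Sig'')$ is precisely the undirected version of the directed Schreier graph $\GG(G,P,\Sig')$ in the sense of the construction preceding \prref{lem:directed}: every directed edge $(Pg,a)$ with $a\in\Sig'$ in $\GG(G,P,\Sig')$ is paired with the edge $(Pga,\ov a)\in\GG(G,P,\Sig'')$ going in the opposite direction, and this pairing realizes exactly the fixed-point-free involution on edges that turns a directed graph into an undirected one. If $\Sig'$ already happens to be symmetric as a monoid generating set (i.e.\ contains inverses), the construction merely produces parallel edges between the same pairs of vertices, which affects neither amenability nor return probabilities up to constant factors.

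Finally, \prref{lem:directed} tells us that if the undirected version of a $d$-regular directed graph is non-amenable, then the random walk on the directed graph itself has exponentially decreasing return probability. Applying this with $\GG=\GG(G,P,\Sig')$ and $\GG'=\GG(G,P,\Sig'')$ gives the conclusion. The only delicate step is the identification in the middle paragraph, but since it is purely combinatorial bookkeeping on edge sets and involutions, I expect it to be a routine verification rather than a real obstacle.
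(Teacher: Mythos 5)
Your proposal is correct and follows exactly the route the paper intends: the paper gives no separate proof of \prref{prop:indgen} but introduces it as the combination of \prref{cor:indgen} with \prref{lem:directed}, and your argument (symmetrize $\Sig'$, note that $\GG(G,P,\Sig'\cup\ov{\Sig'})$ is the undirected version of $\GG(G,P,\Sig')$ via the involution $\ov{(Pg,a)}=(Pga,\ov a)$, invoke non-amenability transfer and then the second half of \prref{lem:directed}) is precisely that combination, with the identification of the undirected version being the routine verification you describe.
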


 \subsection*{A counterexample for Propopsition 38 of \cite{CeccheriniSilbersteinGH98}}
 The fact that amenability of Schreier graphs does not depend on the 
 symmetric generating set can be derived easily from \cite[Prop.\ 38]{CeccheriniSilbersteinGH98}, see \cite[Prop 6.2]{KMSS1}. However, the proposition does not hold in full generality as stated in \cite{CeccheriniSilbersteinGH98}, see 
 \prref{ex:fehler} below. (It still holds for graphs of bounded degree and the proof in \cite[Prop 6.2]{KMSS1} is therefore not affected.)
 We need the notion of quasi-isometry:
 \begin{definition}
 	A \ei{quasi-isometry} between graphs 
 	$\GG= (V,E)$ and $\GG'= (V',E')$ is a function $f: V \to V'$ such that there is some constant $C >0 $ satisfying the following conditions:
 	\begin{itemize}
 		\item $\displaystyle{\vphantom{\frac{1}{\ov C}}\frac{1}{C} \cdot d_\GG(x,y) - C \leq d_{\GG'}(f(x),f(y)) \leq C\cdot d_\GG(x,y) + C}$ for all $x, y \in V$,
 		\item for every $y \in V'$ there exists some $x \in V$ such that $d_{\GG'}(y,f(x)) \leq C$.
 	\end{itemize}
 \end{definition}
 
 \cite[Prop.\ 38]{CeccheriniSilbersteinGH98} asserts that condition \ref{gromov} of \prref{prop:amenability} is a quasi-isometry
 invariant for discrete metric spaces; thus, in particular, for arbitrary locally finite undirected graphs.
 However, this is not true, in general. 
 There are locally finite graphs satisfying \prref{prop:amenability}~\ref{gromov} and which are quasi-isometric to amenable graphs:

 \begin{figure}[ht]
 	\begin{center}
 		\vspace{1.5mm}
 		\begin{tikzpicture}[ xscale=1.3,outer sep=0pt, inner sep = 0.15pt]
 		\node[draw, circle, fill] at (0,0){};
 		\node at (0,1.5){\small$0$};
 		\foreach \x in {0,1,2,3,4}
 		{
 			\node[draw,circle, fill] at (\x+1,0){};
 			\draw (\x,0) -- (\x+1,0);
 			
 			\draw (\x+1,0) -- (\x+1,1);
 			\node[draw,circle, fill] at (\x+1,1){};
 			
 			\node at (\x+1,1.5){\small$\pgfmathparse{int(2^(\x+1)-1)}\pgfmathresult$};
 			\pgfmathparse{(2^(\x+1)-2)/2}
 			\ifthenelse{\equal{\x}{0}}{}
 			{
 				\foreach \y in {1,...,\pgfmathresult}
 				{
 					{
 						\draw[line width=0.2pt] (\x+1,0) -- (\x+1 + 0.15*\y/\x,1);
 						\node[draw,circle, fill] at (\x+1 + 0.15*\y/\x,1){};
 						\node[draw,circle, fill] at (\x+1 - 0.15*\y/\x,1){};
 						\draw[line width=0.2pt] (\x+1,0) -- (\x+1 - 0.15*\y/\x,1);
 					}
 				};
 			}
 		}
 		\draw (5,0) -- (5.5,0);
 		\node at (6,0.5){$\cdots$};
 		\end{tikzpicture}
 		\caption{Counterexample for \cite[Prop.\ 38]{CeccheriniSilbersteinGH98}.}\label{fig:counterex} \vspace{-3mm}
 	\end{center}
 \end{figure}
 \begin{example}\label{ex:fehler}
 	Consider an infinite sequence of rooted trees $T_{k}$ each with $2^k$ nodes of which 
 	$2^k-1$ are leaves for $k = 0, 1,2, \ldots$. Connect the root of $T_{k}$ with the root of 
 	$T_{k+1}$, see \prettyref{fig:counterex} for the resulting undirected graph $\GG$ (the numbers displayed there are the numbers of leaves of the trees $T_k$). Since there is a two-to-one mapping of $T_{k+1}$
 	onto $T_{k}$, the graph satisfies the Gromov condition from \prref{prop:amenability}~\ref{gromov}. On the other hand, mapping 
 	all leaves to their roots we obtain a quasi-isometry onto 
 	a graph which is amenable.
 	$\Diamond$ 
 \end{example}

 \section{Proofs for Theorems~\ref{THMONE}, \ref{THMA}, and \ref{THMH}}\label{sec:repr}
\subsection{Amenability of Schreier graphs for amalgams}\label{sec:amas}
The aim of this section is to prove \prettyref{thm:a}. 
Thus, we consider an amalgamated product $G=H \star_A K$ with vertex groups 
$H$ and $K$ and edge group $A$. 
We start by choosing transversals $C\subseteq H$
and $D\subseteq K$ for cosets of $A$ in $H$ and in $K$ with $1 \in C \cap D$ such that there are unique decompositions
$H = AC$ and $K= AD$.
The following lemma is easy to see, c.f.~\cite[Sec.\ 7.4]{ddm10}). 
\begin{lemma}\label{lem:amalgNF}
Every group element $g \in G$ can be uniquely written as
\vspace{-2mm}$$g =_G x_0 \cdots x_k$$
for some $k\in \N$, $x_0 \in H \cup K$ such that for all $1 \leq i \leq k$ we have\vspace{-1.8mm}
\begin{align*}
x_i &\in C\cup D \setminus \oneset{1};\\
x_{i-1} &\in H \iff x_{i} \in K.
\end{align*}
\end{lemma}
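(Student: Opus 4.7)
The statement is the classical normal form for amalgamated products, and I would split the proof into existence and uniqueness.

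For existence, start with any factorization $g =_G g_1 g_2 \cdots g_n$ in which consecutive factors alternate between $H$ and $K$; this is immediate from any word over $\Sig_H \cup \Sig_K$ representing $g$ after grouping maximal blocks in each vertex group. I would then process the sequence from right to left using the transversal decompositions $H = AC$ and $K = AD$. If the current rightmost factor lies in $H$, decompose it as $ac$ with $a \in A$, $c \in C$; when $c = 1$, absorb the factor into its left neighbour (which lies in $K \supseteq A$) and shorten the sequence, otherwise record $c \in C \sm \oneset{1}$ as the next $x$-factor (appended on the right of the emerging normal form) and replace the left neighbour $g_{n-1}$ by $g_{n-1} a$, which still lies in $K$. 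Treat the $K$-case symmetrically using $K = AD$. Induction on the sequence length leaves, in the end, a single leading factor $x_0 \in H \cup K$, and the recorded factors form $x_1,\dots,x_k \in (C \cup D) \sm \oneset{1}$ with $x_{i-1} \in H \iff x_i \in K$ by construction.

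For uniqueness, I would reduce to the classical amalgam normal form theorem (e.g.~\cite[Thm.~IV.2.6]{LS01}): a product $a \cdot z_1 \cdots z_m$ with $a \in A$, $m \geq 0$, each $z_i$ a non-trivial transversal representative in $C$ or $D$, and consecutive $z_i$ from different transversals, is uniquely determined by the group element it represents. Given two decompositions $x_0 \cdots x_k =_G y_0 \cdots y_\ell$ of $g$ in the stated form, write $x_0 = a c_0$ and $y_0 = a' c_0'$ using $H = AC$ or $K = AD$ (the side being fixed by whether $x_1$, respectively $y_1$, lies in $C$ or in $D$). Since $1 \in C \cap D$ is the unique representative of the trivial coset in either transversal, every element of $(C \cup D) \sm \oneset{1}$ avoids $A$. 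Both sides are now in the classical $A$-prefixed normal form, and its uniqueness yields $a = a'$, $c_0 = c_0'$, $k = \ell$, and $x_i = y_i$ for $i \geq 1$; consequently $x_0 = y_0$.

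The only subtle point, and the one I expect to require care, is the corner case $x_0 \in A = H \cap K$, where $x_0$ could a priori be read on either the $H$- or the $K$-side. The alternation condition fixes this unambiguously via $x_1$ when $k \geq 1$; when $k = 0$ the element $x_0 = g$ is trivially uniquely determined as a group element. Beyond this bookkeeping, the proof is a direct appeal to the classical normal form theorem for amalgams.
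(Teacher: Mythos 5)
The paper itself gives no proof of this lemma (it just points to \cite[Sec.~7.4]{ddm10}), so the only question is whether your argument is correct; it has two genuine gaps. First, in the existence part, the claim that the recorded factors alternate ``by construction'' fails because of the absorption step: when the current factor lies in $A$ you merge it into its left neighbour, which belongs to the \emph{same} vertex group as the factor from which you recorded the previous representative, so the next recorded representative (or the terminal $x_0$) can land on the same side as its right neighbour. Concretely, take $G=\langle s,u \mid s^2=u^2\rangle=\langle s\rangle \star_{\langle s^2\rangle}\langle u\rangle$ with $C=\{1,s\}$, $D=\{1,u\}$, and the alternating factorization $s\cdot u^2\cdot s$. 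Your procedure records $x_1=s$, then finds that $u^2$ has trivial $D$-part and absorbs it into the first factor, terminating with $x_0=su^2=_G s^3$; but $(s^3,s)$ violates $x_0\in H \iff x_1\in K$ (the correct form of $g=_G s^4$ has $k=0$). The standard repair is to first reduce the alternating factorization so that no factor lies in $A$ (when there are at least two factors); after that, right-to-left peeling works, since a factor outside $A$ stays outside $A$ after right multiplication by an element of $A$, so every peeled representative is nontrivial and alternation is inherited from the reduced factorization.

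Second, the ``corner case'' $x_0\in A$ is not, as you assert, fixed by the alternation condition, and this is exactly where your uniqueness argument breaks. If $g$ has classical form $a z_1\cdots z_m$ with $a\in A$ and $z_1\in D$, then both $(a,z_1,z_2,\dots,z_m)$ and $(az_1,z_2,\dots,z_m)$ satisfy every condition in the statement: in the first, $a\in A\subseteq H$ and $z_1\in K$; in the second, $az_1\notin H$ and $z_2\notin K$, so the equivalence holds vacuously on both sides. (Simplest instance in the group above: $s^2u=(s^2)(u)=(s^2u)$.) Both collapse to the same $A$-prefixed classical normal form, so the appeal to \cite[Thm.~IV.2.6]{LS01} cannot distinguish them, and the step ``uniqueness yields $k=\ell$ and $x_i=y_i$'' does not follow. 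In fact the lemma as literally stated is not uniquely determined; the intended reading carries an extra normalization such as ``$x_0\notin A$ whenever $k\geq 1$'' (equivalently, $k$ minimal, i.e.\ $x_0$ absorbs the $A$-prefix together with the adjacent representative), or alternatively ``$x_0\in H$ always''. Once such a clause is fixed, your reduction to the classical normal form theorem does become a complete uniqueness proof, since normalized decompositions are then in bijection with classical forms; so the overall plan is sound, but both halves need these repairs.
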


 \begin{remark}\label{rem:amalgamen}
 Let $G =H\star_A K$ with $[H:A]=[K:A]=2$ and $P\in \oneset{H,K}$, $\Sig \sse_G A \cup \os{h,\ov h, k,\ov k }$ a finite symmetric generating set with $h\in H$ and $k\in K$.  Then we have 
 $$ p^{(2n)}(P,P)\in \Theta(n^{-1/2}).$$ 

 As $A$ is normal in both $H$ and $K$, we can rewrite any word $w\in \Sig^*$ to a unique normal form $x_0 \cdots x_\ell$ 
for some $ \ell \in \N$ such that $x_0 \in H \cup K$, $x_i \in \oneset{h,k}$ for $i>0$ and letters $h$ and $k$ always alternate. 
	
	Let $P\in \os {H,K}$ one of the vertex groups. Up to self-loops, the Schreier graph $\GG(G,A,\Sig)$ is isomorphic to the 
Cayley graph $\GG_{\Z}$ of $\Z$ with respect to the natural symmetric generating set $\os{\pm 1}$. As $A$ is normal in $P$, there is an action of $P$ (more precisely, of $P/A$) on  $\GG(G,A,\Sig)$. Now, the Schreier graph  $\GG(G,P,\Sig)$ is obtained as quotient $\lquot{P}\GG(G,A,\Sig)$. This mean the edge $(A, h)$ (resp.\ $(A, k)$) is ``folded''~-- \ie its endpoints are identified, see \prettyref{fig:amalgSchreier}.

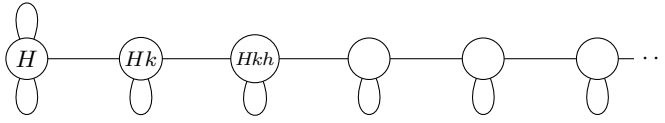
\begin{figure}
 \begin{center}
 	\vspace{-3mm} 	
 	\begin{tikzpicture}[node distance = 15mm, xscale=1.6,outer sep=0pt, inner sep = 0pt]
 	\small
 	\node[minimum size = 5.5mm, draw,circle,  inner sep =1pt] (a){$H$};
 	\node[minimum size = 5.5mm,draw,circle,  inner sep =1pt, right of = a] (b){\footnotesize$Hk$};
 	\node[minimum size = 5.5mm,draw,circle,  inner sep =1pt, right of = b] (c){\scriptsize $H\:\!\!k\;\!\!h$};
 	\node[minimum size = 5.5mm,draw,circle,  inner sep =1pt, right of = c] (d){};
 	\node[minimum size = 5.5mm,draw,circle,  inner sep =1pt, right of = d] (e){};
 	\node[minimum size = 5.5mm,draw,circle,  inner sep =1pt, right of = e] (f){};
  	
 	\draw (a) edge (b);
 	\draw (b) edge (c);
 	\draw (c) edge (d);
 	\draw (d) edge (e);
 	\draw (e) edge (f);
 	\draw (f) edge +(.3,0);
 		
 	\draw  (a) ..controls +(-.2,-.9) and +(.2,-.9).. (a);%
 	\draw  (a) ..controls +(-.2,.9) and +(.2,.9).. (a);%
 	\draw (b) ..controls +(-.2,-.9) and +(.2,-.9).. (b);
 	\draw (c) ..controls +(-.2,-.9) and +(.2,-.9).. (c);
 	\draw (d) ..controls +(-.2,-.9) and +(.2,-.9).. (d);
 	\draw (e) ..controls +(-.2,-.9) and +(.2,-.9).. (e);
 	\draw (f) ..controls +(-.2,-.9) and +(.2,-.9).. (f);
	
 	\node[circle,  inner sep =2pt,node distance = 8mm, right of = f]{$\cdots$};
 	
 	\end{tikzpicture}
	 \caption{The Schreier graph $\GG(H\star_A K, H, \Sig)$ for  $[H:A]=[K:A]=2$.}\label{fig:amalgSchreier} \vspace{-3mm}
	\end{center}
\end{figure}
The return \prob in $\GG_{\Z}$ (without self-loops) is $\binom{n}{n/2}2^{-n}\in \Theta(n^{-1/2})$ for $n$ even. Adding self-loops changes this only by a multiplicative constant. Folding the edge does not decrease the return probability (compare to \prref{rem:quotient}). Thus, we have $p^{(2n)}(P,P) \in \Theta(n^{-1/2})$
whether or not there are self-loops.
 $\Diamond$ \end{remark}

\begin{proof}[Proof of \prettyref{thm:a}]
For the only-if direction we assume $[H:A]=[K:A] = 2$. Then the Schreier graph $\GG(G,P,\Sig)$ is amenable due to \prettyref{rem:amalgamen} and \prettyref{prop:amenability}.
 
 For the other direction let $[H:A]\geq 3$. We show condition \ref{gromov} in \prettyref{prop:amenability}. The idea is the same as for the free group, see \prref{fig:freeCayley}. We define a function $f: \lquot{P}{G}\to \lquot{P}{G}$ as follows: We fix $c,c'\in C $ with  $c\neq 1 \neq c'$ and $c\neq c'$ and some $1\neq d \in D$. 
 For a normal form $w$ (according to \prettyref{lem:amalgNF}) with $w=vcd$ or $w=vc'd$ for some word $v$, we set $f(Pw) = P v$. Likewise, for a normal form $w$ with $w=vdc$ or $w=vdc'$, we set $f(Pw) = P v$. 
 Otherwise, we set $f(Pw) = P w$. Due to \prettyref{lem:amalgNF}, the function $f$ is well-defined.
Let $k\in \N$ be some number which is large enough such that 
 $c$, $c'$, and $d$ can be written with at most $k$ letters from $\Sig$. Then we obtain $\sup \set{d(f(Pw),P w)}{Pw \in \lquot{P}{G}}\leq 2k<\infty$. 
 For every normal form $w$, either $wcd$ and $wc'd$ or $wd c$ and $wd c'$ are normal forms. Hence, we have $\abs{f^{-1}(P w)}\geq 2 $ for all $w\in G$.
\end{proof}

\subsection{Amenability of Schreier graphs for HNN extensions}\label{sec:HNNs}
The aim of this section is to prove \prettyref{thm:h}. 
Thus,
let $$G = \genr{H,\stl}{\stl a\stl^{-1}=\phi(a) \text{ for } a \in A}$$ be an HNN extension of $H$ with an isomorphism $\phi :A\to B$ of subgroups $A$, $B$ of $H$. 
We can prove an analog result as for amalgamated products by the same strategy: first, we define certain normal forms, second, we show that 
the Schreier graph is amenable if $H=A$, and third, we show, using again the Gromov condition, that it is non-amenable for $A\neq H\neq \phi(A)$.

To begin with, we choose transversals for cosets of $A$ and $B = \phi(A)$ in $H$;
that is $C,D\subseteq H$ with $1 \in C \cap D$ such that there are unique decompositions
$H = AC$ and $H= BD$.
The next lemma is easy to see again, c.f.~\cite[Sec.\ 7.3]{ddm10}. 
It is the analogue of \prettyref{lem:amalgNF}. In order so simplify notation, we write $\oi t$ for the letter $\ov t$.
\begin{lemma}\label{lem:hnnNF}
Every group element $g \in G$ can be uniquely factorized (as a word over $H \cup \oneset{\stl,\ov\stl}$) as
$$g =_G x_0t^{\eps_1}x_1 \cdots t^{\eps_k}x_k$$
such that $k\in \N$, $x_0 \in H $, and
for all $1 \leq i \leq k$ we have:
\begin{itemize}
\setlength{\itemsep}{3.5pt}
\item $x_i \in C\cup D$ and $\eps_i \in \oneset{\pm 1}$;
 \item if $\eps_i=1$, then $x_i \in C$;\ \ if $\eps_i=-1$, then $x_i \in D$;
 \item if $\eps_i=-\eps_{i+1}$ and $i<k$, then $x_i \neq 1$.
\end{itemize}
\end{lemma}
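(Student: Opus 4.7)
The plan is to prove the lemma in two stages: first existence by a rewriting procedure, and then uniqueness by the van der Waerden trick of a faithful action on the set of formal normal forms.

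For existence, I would start with an arbitrary word representing $g$, grouped as $y_0 t^{\eps_1} y_1 \cdots t^{\eps_\ell} y_\ell$ with $y_i \in H$. I would then apply Britton reductions: whenever a factor $t^{\eps_i} y_i t^{\eps_{i+1}}$ occurs with $\eps_i = -\eps_{i+1}$ and $y_i \in A$ (if $\eps_i=1$) or $y_i \in B$ (if $\eps_i=-1$), replace it by the single $H$-element $\phi(y_i)$ or $\phi^{-1}(y_i)$ and merge with the neighboring $H$-factors. Since each reduction strictly decreases the $t$-length, this terminates in a Britton-reduced word. Finally, I would use the defining relations in the form $ta = \phi(a)t$ (for $a \in A$) and $t^{-1} b = \phi^{-1}(b) t^{-1}$ (for $b \in B$) from right to left: for each $i \geq 1$, write $y_i = a_i c_i$ with $a_i \in A, c_i \in C$ when $\eps_i = +1$ and $y_i = b_i d_i$ with $b_i \in B, d_i \in D$ when $\eps_i = -1$, and slide $a_i$ (resp.\ $b_i$) leftward across $t^{\eps_i}$ to be absorbed into $y_{i-1}$. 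The resulting factorization satisfies all three bulleted conditions: the last one is exactly the Britton-reducedness preserved throughout.

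For uniqueness I would argue via a right action of $G$ on the set $S$ of all formal tuples $(x_0, \eps_1, x_1, \ldots, \eps_k, x_k)$ satisfying the stated conditions. Multiplication by $h \in H$ acts on a tuple by modifying $x_k$: replace $x_k h$ using the appropriate transversal decomposition; if the new $x_k$ becomes $1$ and $\eps_k$ cancels with the preceding $t^{\eps_k}$-letter (i.e.\ the tuple shortens), absorb the $A$- or $B$-part into $x_{k-1}$. Multiplication by $t$ (resp.\ $t^{-1}$) appends $(+1, 1)$ (resp.\ $(-1, 1)$) to the tuple, except in the cancelling case where $\eps_k = -1$ and $x_k \in B$ (resp.\ $\eps_k = +1$ and $x_k \in A$, with trivial transversal part), in which case it performs the inverse Britton reduction. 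One checks by direct case analysis that these operations respect all defining relations of $G$, namely the relations of $H$ and $t a t^{-1} = \phi(a)$ for $a \in A$; hence they extend to a right action of $G$ on $S$. Applying any $g \in G$ to the trivial tuple $(1) \in S$ yields a tuple which must equal any normal form for $g$, giving uniqueness.

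The main obstacle is the bookkeeping for the action in the uniqueness step: when right-multiplication by $h \in H$ triggers a cancellation $t^{\eps_k}\cdot 1$ that merges two $H$-factors, one has to verify that further multiplications still give the expected result, and that the two critical relations $t t^{-1} = 1$ and $t a t^{-1} = \phi(a)$ act identically on every tuple. This is routine but requires distinguishing the cases $\eps_k = \pm 1$ together with whether $x_k$ lies in $A$, in $B$, or outside. Once these cases are dispatched, existence together with the faithful action yields both the canonical form and its uniqueness.
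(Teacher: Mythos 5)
Your overall plan---existence via Britton reduction followed by sliding the $A$-/$B$-parts leftward across the stable letters, uniqueness via a van der Waerden action on the set of formal normal-form tuples---is the standard proof of the normal form theorem for HNN extensions; the paper itself gives no argument for this lemma (it only points to \cite[Sec.~7.3]{ddm10}), so there is no in-paper proof to compare against. Your existence half is sound: processing the slides from right to left is indeed the correct order, and the slides preserve Britton-reducedness because the factor absorbed into $y_{j-1}$ lies in $A$ (when $\eps_j=-1$) resp.\ $B$ (when $\eps_j=+1$), which is exactly the subgroup whose membership governs a potential pinch at that position; together with $C\cap A=\{1\}=D\cap B$ this gives the third bullet.

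The uniqueness half, however, has a concrete defect in the definition of the $H$-action. In this normal form the unconstrained $H$-slot is $x_0$, at the \emph{left} end, while $x_1,\dots,x_k$ are transversal representatives; hence right multiplication by $h\in H$ is not a local operation on $x_k$. Writing, say for $\eps_k=+1$, $x_kh=ac$ with $a\in A$, $c\in C$, you must carry $\phi(a)$ across $t^{\eps_k}$ into $x_{k-1}$, which then typically leaves $C\cup D$ and must be decomposed again, and so on: the modification cascades all the way down to $x_0$. Your description suppresses this cascade, and the clause about ``the new $x_k$ becoming $1$ and $\eps_k$ cancelling with the preceding $t$-letter'' is spurious: multiplying by an element of $H$ on the right never triggers a Britton reduction, since a pinch needs stable letters on both sides. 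As written, the map either does not land in $S$ or does not implement right multiplication by $h$, so the verification that the relations of $H$ (and $tat^{-1}=\phi(a)$, $tt^{-1}=t^{-1}t=1$) are respected cannot even be started. The repair is standard: either define the full cascade and prove by induction on $k$ that it yields an $H$-action compatible with the $t^{\pm1}$-maps, or---cleaner, and adapted to this normal form---use a \emph{left} action, where $h$ acts simply by $x_0\mapsto hx_0$ with no cascade and only $t^{\pm1}$ require a case distinction (pinch at the front or not); feeding in the letters of any normal form of $g$ then reproduces that tuple step by step (each step is the non-cancelling case thanks to the bulleted conditions), which gives uniqueness as you intend. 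If you insist on a right action, you should instead flip the normal form so that the free $H$-entry sits at the right end.
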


 \begin{remark}\label{rem:hnnamen}
 Let $G= \genr{H,\stl}{\stl a\stl^{-1}=\phi(a), a \in A}$ with $A=H$ (and $B=H$ or $B\neq H$). Let $\Sig \sse_G H \cup \oneset{t, \ov t}$ be a finite generating set of $G$. 
 Then we have
 $$ p^{(2n)}(H,H)\in \OO(n^{-3/2}).$$ 
 
Let $0 \leq m \leq n$. Consider a word $w\in \Sig^*$ of length $2n$ with $m=\abs{w}_t = \abs{w}_{\ov t}$ such that every prefix $w'$ of $w$ satisfies $\abs{w'}_t \geq \abs{w'}_{\ov t}$.
Then we have $w \in H$. This is because always some Britton reduction can be applied (replacing $\stl a\ov t$ by $\phi(a)$), and hence the word can be reduced to some word in $H$. Restricted to $\os{t,\ov t}$, these words are exactly the Dyck words of length $2m$. The number of Dyck words of length $2m$ is the $m$-th Catalan number $\frac{1}{m+1}\binom{2m}{m}$, see \eg\ \cite{gkp94}.
We have $\frac{1}{m+1}\binom{2m}{m} \in \Theta(m^{-3/2}\cdot 4^m)$. 
As $2m\leq 2n$ we obtain the desired result.
$\Diamond$ \end{remark}

\begin{example}\label{ex:SchreierBS}
  	
  Consider the Schreier graph $\GG(\BS{1}{2}, \gen{a}, \oneset{a,\ov a, t,\ov t})$ of the Baumslag-Solitar group $\BS12$, see \prref{fig:SchreierBS}.
The branch to the right are those vertices $\gen{a}\!w$ with $\abs{w}_t \geq \abs{w}_{\ov t}$. As long as the random walk does not leave this branch, we are in the situation as in \prref{rem:amalgamen}. Thus, the return probability remains polynomially high.
\begin{figure}
	\begin{center}
		\vspace{-3mm} 	
		\begin{tikzpicture}[node distance = 15mm, xscale=1.6,outer sep=0pt, inner sep = 0pt]
		\small
		\node[state,accepting,minimum size = 7mm, draw,circle,  inner sep =1pt] (a){$\gen{a}$};
		\node[minimum size = 7mm,draw,circle,  inner sep =1pt, right of = a] (b){\footnotesize$\gen{a}\!t$};
		\node[minimum size = 7mm,draw,circle,  inner sep =1pt, right of = b] (c){\tiny $\gen{a}\!\!\!\:tt$};
		\node[minimum size = 7mm,draw,circle,  inner sep =1pt, left of = a] (d){ \footnotesize $\gen{a}\!\ov t$};
		\node[minimum size = 7mm,draw,circle,  inner sep =1pt,node distance = 23mm, left of = d] (e){\tiny $\gen{a}\!\!\!\:\ov t \ov t$};
		\node[minimum size = 5mm,draw,circle,  inner sep =1pt,node distance = 12mm, left of = e] (f){};
		
		\node[minimum size = 5mm,draw,circle,  inner sep =1pt,node distance = 25mm, above of = d] (da){ };
		\node[minimum size = 3mm,draw,circle,  inner sep =1pt,node distance = 12mm, above of = e] (ea){};
		\node[minimum size = 2mm,draw,circle,  inner sep =1pt,node distance = 6mm, above of = f] (fa){};
		
		\node[minimum size = 5mm,draw,circle,  inner sep =1pt,node distance = 12mm, right of = da] (datz){ };
		\node[minimum size = 5mm,draw,circle,  inner sep =1pt,node distance = 12mm, right of = datz] (dat){ };
		\node[minimum size = 3mm,draw,circle,  inner sep =1pt,node distance = 23mm, left of = da] (dad){ };
		\node[minimum size = 3mm,draw,circle,  inner sep =1pt,node distance = 10mm, above of = dad] (dada){ };
		\node[minimum size = 2mm,draw,circle,  inner sep =1pt,node distance = 12mm, left of = dad] (dadd){ };
		\node[minimum size = 2mm,draw,circle,  inner sep =1pt,node distance = 5mm, above of = dadd] (dadda){ };

				\node[minimum size = 3mm,draw,circle,  inner sep =1pt,node distance = 7mm, right of = dada] (dadatz){ };
				\node[minimum size = 3mm,draw,circle,  inner sep =1pt,node distance = 7mm, right of = ea] (eatz){ };
				
		\node[minimum size = 3mm,draw,circle,  inner sep =1pt,node distance = 7mm, right of = dadatz] (dadat){ };
			\node[minimum size = 3mm,draw,circle,  inner sep =1pt,node distance = 7mm, right of = eatz] (eat){ };
			\node[minimum size = 2mm,draw,circle,  inner sep =1pt,node distance = 12mm, left of = ea] (ead){ };
			\node[minimum size = 2mm,draw,circle,  inner sep =1pt,node distance = 7mm, above of = ead] (eada){ };
			\node[minimum size = 2mm,draw,circle,  inner sep =1pt,node distance = 12mm, left of = dada] (dadad){ };
			\node[minimum size = 2mm,draw,circle,  inner sep =1pt,node distance = 4mm, above of = dadad] (dadada){ };
			\draw (ea) edge (eatz);
			\draw (eatz) edge (eat);
			\draw (ea) edge (ead);
			\draw (dada) edge (dadad);
			\draw (dadad) edge (dadada);		
			\draw (eada) edge (ead);
		
			\draw (dadda) edge (dadad);
			\draw (ea) edge (dad);
			\draw (dada) ..controls +(-.4,-.63) and +(-.4,.63).. (e);
			\draw (dadada) ..controls +(-.5,-.63) and +(-.5,.63).. (f);
			
			\draw (da) ..controls +(-.2,-.63) and +(-.2,.63).. (d);
			\draw (dadatz) ..controls +(-.2,-.63) and +(-.2,.63).. (eatz);
			\draw (da) ..controls +(.2,-.63) and +(.2,.63).. (d);
			\draw (dadatz) ..controls +(.2,-.63) and +(.2,.63).. (eatz);

		\draw (a) edge (b);
		\draw (b) edge (c);
		\draw (d) edge (a);
		\draw (d) edge (e);
		\draw (e) edge (f);
		\draw (fa) edge (f);
		\draw (c) edge +(.4,0);
				\draw (datz) edge (da);
				\draw (dat) edge (datz);
					\draw (dad) edge (da);
					\draw (dad) edge (dadd);
					\draw (eada) edge (dadd);
					\draw (dada) edge (dadatz);
					\draw (dadatz) edge (dadat);
					\draw (dadd) edge (dadda);
			\draw (e) edge (ea);
			\draw (f) edge +(-.4,0);
			\draw (dat) edge +(.2,0);
			
			\draw (fa) edge (ead);
			
			\draw (dad) edge (dada);
						draw (f) edge +(0,-.2);

		\draw  (a) ..controls +(-.2,-.9) and +(.2,-.9).. (a);%
		\draw (b) ..controls +(-.2,-.9) and +(.2,-.9).. (b);
		\draw (c) ..controls +(-.2,-.9) and +(.2,-.9).. (c);
		\draw (dat) ..controls +(-.2,-.9) and +(.2,-.9).. (dat);
		\draw (datz) ..controls +(-.2,-.9) and +(.2,-.9).. (datz);
		\draw (eat) ..controls +(-.14,-.63) and +(.14,-.63).. (eat);
		\draw (dadat) ..controls +(-.14,-.63) and +(.14,-.63).. (dadat);

		\node[node distance = 10mm,circle,  inner sep =2pt, right of = c]{$\cdots$};
		\node[node distance = 3mm,circle,  inner sep =2pt, left of = dadd]{\tiny $\cdots$};
		\node[node distance = 6mm,circle,  inner sep =2pt, right of = dat]{\footnotesize $\cdots$};
		\node[node distance = 10mm,circle,  inner sep =2pt, left of = f]{$\cdots$};
			\node[node distance = 3mm,circle,  inner sep =2pt, left of = dadda]{\tiny $\cdots$};
			\node[node distance = 3mm,circle,  inner sep =2pt, right of = dadda]{\tiny $\cdots$};
			
				\node[node distance = 3mm,circle,  inner sep =2pt, left of = dadada]{\tiny $\cdots$};
				\node[node distance = 3mm,circle,  inner sep =2pt, right of = dadada]{\tiny $\cdots$};
					\node[node distance = 3mm,circle,  inner sep =2pt, left of = eada]{\tiny $\cdots$};
					\node[node distance = 3mm,circle,  inner sep =2pt, right of = eada]{\tiny $\cdots$};
						\node[node distance = 3mm,circle,  inner sep =2pt, left of = fa]{\tiny $\cdots$};
						\node[node distance = 3mm,circle,  inner sep =2pt, right of = fa]{\tiny $\cdots$};
							\node[node distance = 3mm,circle,  inner sep =2pt, left of = dadad]{\tiny $\cdots$};
							\node[node distance = 3.3mm,circle,  inner sep =2pt, right of = eat]{\tiny $\cdots$};
							\node[node distance = 3.3mm,circle,  inner sep =2pt, right of = dadat]{\tiny $\cdots$};
					\node[node distance = 3mm,circle,  inner sep =2pt, left of = ead]{\tiny $\cdots$};

		\end{tikzpicture}
		\caption{The Schreier graph $\GG(\BS{1}{2}, \gen{a}, \oneset{a,\ov a, t,\ov t})$. As long as the random walk stays in the right part, the probability to return to the origin is not exponentially decreasing.}\label{fig:SchreierBS} \vspace{-3mm}
	\end{center}
\end{figure}
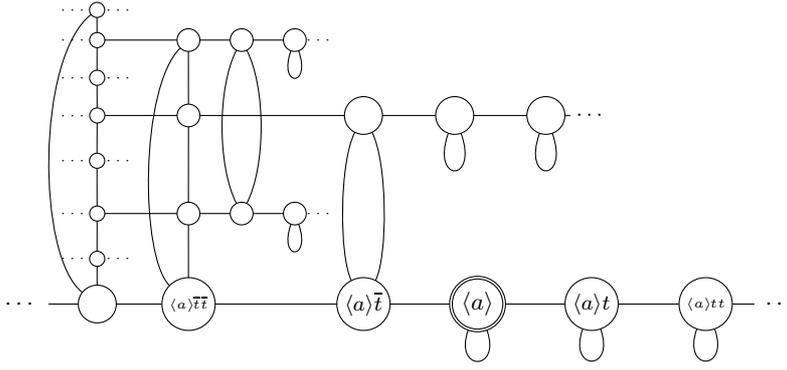
 $\Diamond$\end{example}

\begin{proof}[Proof of \prettyref{thm:h}]
The case $A=H$ leads to an amenable Schreier graph as we calculated in \prettyref{rem:hnnamen}.

For the other direction, let $[H:A]\geq 2$ and $[H:B]\geq 2$. 
We show condition \ref{gromov} in \prettyref{prop:amenability}. In order to do so, we define a function $f:\lquot{H}{G}\to \lquot{H}{G}$ as follows: We fix $1\neq c\in C$ and $1\neq d \in D$. 
 (Note that we cannot exclude $c = d$.) 
 Let $w=x_0t^{\eps_1}x_1 \cdots t^{\eps_k}x_k$ be in normal form according to \prref{lem:hnnNF}.
 If $w$ ends with $x_k=1$, then there is some $\gam \in \oneset{c,d}$ (depending on $\eps_k$) such that both $w \gam\stl$ and $w\gam\ov \stl$ are in normal form (we identify $1 \in H$ with the empty word); and we set $f(Hw \gam \stl) = f(Hw\gam\ov \stl) = Hw$.
 If $w$ ends with $x_k\neq 1$, then $w\stl c$ and $w\ov \stl d$ are both in normal form, and we set $f(Hw \stl c) = f(Hw\ov \stl d) = Hw$. Furthermore, we set $f(Hw') = Hw'$ for words $w'$ in normal form which are not of the above form. Because of \prettyref{lem:hnnNF}, the function $f$ is well-defined.
 Let $k\in \N$ some number which is large enough that 
 $c$, $d$, $\stl$ and $\ov \stl$ can we written with at most $k$ letters from $\Sig$. Then we obtain $\sup \set{d(f(Hw),H w)}{Hw \in \lquot{H}{G}}\leq 2k<\infty$. 
Moreover, we have $\abs{f^{-1}(Hw)}\geq 2 $ for all $w\in G$.
\end{proof}

\subsection{Proof of \prettyref{thm:one}}
\label{sec:amengeneric}
 The aim of this section is to prove \prettyref{thm:one}
 using \prettyref{thm:a} for amalgams and \prettyref{thm:h} for HNN extensions. In the following we say a word over a set of generators is 
 {\em elliptic} if it represents an elliptic element in the 
 generated group. This is the complement of the words representing hyperbolic group elements. Therefore, \prettyref{thm:one}
is a consequence of the following two assertions: 
\begin{enumerate}
\item The set of elliptic words is strongly negligible in $\Sig^*$.
\label{giaa}
\item The set of cyclically reduced, elliptic words is strong\-ly negligible in $\Del$ (recall that $\Del$ denotes the set of cyclically reduced words).
\label{gibb}
\end{enumerate}
We only state the proof of \prettyref{thm:one} for HNN extensions. 
The proof for amalgamated products is an almost verbatim translation of the 
proof for HNN extensions. Details are left to the reader.

First, let us consider another finite set of generators $\Sig'$ with some additional properties. Without restriction we may assume that, first, $\Sig'$ is symmetric and, second, 
$\y,\ov \y \in \Sig'$ and $\Sig'\sse_G H \cup \oneset{\y,\ov \y}$. 
Part of the difficulty in the proof is due to the 
 fact that encoding $\Sig^*$ into $\Sig'^*$ turns $\Sig^*$ into a strongly negligible set inside $\Sig'^*$, in general. This makes the proof a little bit tedious. 

Recall that we do not require $\Sig$ to be symmetric.
 Every letter $a \in \Sig$ can be written as a reduced word 
 $u_{a} \in \Sig'^*$. We need also the somehow other direction.
 For every letter $a\in \Sig$ and every 
 factorization $u_{a}= p_{a}q_{a}$ (with $q_{a}\neq 1$) we assign a pair of (reduced) words $(P_{a},Q_{a}) \in \Sig^*\times \Sig^*$ with $P_a =_G p_a$ and $Q_a =_G q_a$. We let $c \in \N $ be a constant with
$\abs{P_{a}Q_{a}} \leq c$ and  $\abs{u_a}\leq c$ for all $a \in \Sig$ and all factorizations  $u_{a}= p_{a}q_{a}$.
We obtain a list $\cL= \set{(P_{a},Q_{a},a)}{a \in \Sig}$ containing at most $c\abs \Sig$ tuples.

\begin{proof}[Proof of \ref{giaa}] 
The idea is now to label certain words in $\Sig^*$ such that 
at the end all elliptic words are labeled and that the set of words having a label is strongly negligible.

By \prref{thm:h}, the Schreier graph $\GG(G,H,\Sig\cup \ov \Sig)$ is non-amenable (where $\ov \Sig$ is a disjoint set of formal inverses for $\Sig$). 
Thus, by \prref{prop:indgen}, the random walk on $\GG(G,H,\Sig)$ has exponentially decreasing return probability. 
Therefore, we can label all words in $\Sig^*$ which represent elements 
in $H$~-- now the set of words with a label is strongly negligible (see \prettyref{rem:generic}).
But of course, there are many more elliptic elements. Hence, we have to label more words. 

Next, for every $(P_{a},Q_{a},a) \in \cL$ and every word of the form 
$Q_{a}v P_{a}$ which has a label, we label the word $w =av$, too. 
The length of words $Q_{a}v P_{a}$ is between $\abs w$ and 
$\abs w+c$ and for every $v$ there are at most $c\cdot \abs \Sig$ of the form $Q_{a}vP_{a}$. Hence, the set of labeled words remains strongly negligible.

In one more round we label all words $w_{2}w_{1}$ where $w_{1}w_{2}$
has a label. Still the set of words with label is strongly neg\-ligible.

We claim that all elliptic elements are labeled now. To see this, 
let $w=a_{1}\cdots a_{n}$ be a word with $a_{i} \in \Sig$ such that 
$w$ is conjugate to some element in $H$. Then the same is true 
for the word $w'=u_{a_{1}}\cdots u_{a_{n}} \in \Sig'^*$ because
$w =_{G} w'$. Recall that we have $\Sig' \sse_G H \cup\os{t,\ov t}$.
Hence, by Collins' Lemma, we can factorize 
$w'=w'_{1}w'_{2}$ such that $w'_{2}w'_{1}$ represents an element in $H$. 
Therefore, we find some index $i$ and a factorization 
$u_{a_{i}}
= p_{a_{i}}q_{a_{i}}$ with $q_{a_i}\neq 1$ such that 
\begin{align*}
 w'_{2}w'_{1}&= q_{a_{i}}u_{a_{i+1}} \cdots u_{a_{n}} u_{a_{1}} \cdots u_{a_{i-1}}p_{a_{i}}\\
 &=_{G} Q_{a_{i}}{a_{i+1}} \cdots{a_{n}} {a_{1}} \cdots {a_{i-1}}P_{a_{i}} = Q_{a_{i}}v P_{a_{i}}\in_G H.
\end{align*}
The word $Q_{a_{i}}v P_{a_{i}}$ got a label because it is a word representing an element in $H$. 
This puts a label on $a_{i}v$, too. Finally, $w$ is a transposition of 
$a_{i}v$. Hence, latest in the last step, $w$ got a label. 
\end{proof}

\noindent{\bf Proof of \ref{gibb}.}
Recall that $\Del$ denotes the subset of cyclically reduced words 
and that $\Sig$ is assumed to be symmetric (otherwise we did not define the notion of reduced word). 
Let $\Del'$ denote the subset of reduced words. Hence, 
we have $$\Del \sse \Del' \sse \Sig^*.$$
We have $\Sig \geq 4$ (otherwise $G$ is a cyclic group); hence, the degree $d$ of the Schreier graph $ \GG(G,H,\Sig)$ is at least $4$. 
Moreover, \begin{align*}
\qquad\qquad\Del' \cap \Sig^n &= d(d-1)^{n-1} \qquad\qquad\qquad\text{and}\\
\Del \cap \Sig^n &\geq d(d-2)(d-1)^{n-2}
\end{align*} for all $n \geq 1$.
Since $\frac{\abs{\Del' \cap \Sig^n}}{\abs{\Del \cap \Sig^n}} \leq \frac{d-1}{d-2}$, the proof of \ref{gibb} is reduced to show the following lemma.
\begin{lemma}\label{lem:gicc} 
The set of cyclically reduced, elliptic words is strongly negligible in the set of reduced words $\Del'$.
\end{lemma}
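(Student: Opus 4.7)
The strategy is to mimic the proof of~\ref{giaa}, but carrying out the entire labelling argument inside the set $\Delta'$ of reduced words. The crucial input is \prref{thm:h} applied to the Schreier graph $\GG(G,H,\Sig)$: by \prref{prop:amenability}\ref{randomwwbt} the random walk \emph{without backtracking} has exponentially decreasing return probability, and via the canonical bijection between reduced words and non-backtracking paths starting at $H$ this already gives that the set $M_0=\set{u\in\Delta'}{u\in_G H}$ of reduced words representing an element of $H$ is strongly negligible in $\Delta'$. A standard Cauchy--Schwarz (spectral) argument promotes this return-probability decay to an analogous exponential bound on hitting probabilities $q^{(k)}(HQ,HP^{-1})$ between any fixed pair of vertices in $\GG(G,H,\Sig)$.

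With the auxiliary symmetric generating set $\Sig'\sse_G H\cup\os{\stl,\ov\stl}$ and the list $\cL$ of triples $(P_a,Q_a,a)$ introduced exactly as in the proof of~\ref{giaa}, I would then define the two further labelling rounds
\[
M_1=\set{av\in\Delta'}{\exists\,(P,Q,a)\in\cL\text{ with }QvP\in_G H},\qquad M_2=\set{w_2w_1\in\Delta'}{w_1w_2\in M_1}.
\]
For any fixed triple the condition $QvP\in_G H$ means that the reduced path labelled $v$ (of length $n-1$) goes from vertex $HQ$ to vertex $HP^{-1}$ in $\GG(G,H,\Sig)$, so the hitting bound gives $|M_1\cap\Sig^n|\in\Oh(\rho^n)\cdot|\Delta'\cap\Sig^n|$ for some $\rho<1$. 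Closing under cyclic rotations contributes at most a factor of $n$, hence $|M_2\cap\Sig^n|$ stays strongly negligible in $\Delta'$.

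Correctness is inherited essentially verbatim from the proof of~\ref{giaa}: given cyclically reduced, elliptic $w=a_1\cdots a_n$, the Collins-Lemma argument used there produces an index $i$ and a factorization $u_{a_i}=p_{a_i}q_{a_i}$ with $Q_{a_i}vP_{a_i}\in_G H$ for $v=a_{i+1}\cdots a_na_1\cdots a_{i-1}$. The observation specific to~\ref{gibb} is that cyclic reducedness of $w$ makes \emph{every} cyclic rotation of $w$ reduced: in particular $a_iv\in\Delta'$, so $a_iv\in M_1$, and then $w\in M_2$ as a reduced cyclic rotation of $a_iv$. The only step that goes beyond a direct repetition of~\ref{giaa} is the pointwise hitting bound used in the count of $M_1$; this is the main technical point to verify, and it follows from a short spectral computation using $\|R\|<1$ for the random-walk operator on the non-amenable graph $\GG(G,H,\Sig)$.
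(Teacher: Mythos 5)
Your proposal is correct and follows the same skeleton as the paper's proof: the same three labelling rounds (reduced words representing elements of $H$, then words $av$ with $Q_avP_a\in_G H$, then cyclic rotations), and the same Collins-Lemma correctness argument, whose only new ingredient is exactly the observation you make, namely that cyclic reducedness of $w$ makes the rotation $a_iv$ reduced. Where you genuinely diverge is in counting the middle set $M_1$: you bound it directly by the non-backtracking hitting probabilities $q^{(n-1)}(HQ_a,HP_a^{-1})$ for the finitely many pairs coming from $\cL$. Note that the ``promotion'' you flag as the main technical point is, under the paper's conventions, not an extra step at all: the paper defines exponentially decreasing return probability uniformly over \emph{all} pairs $u,v$, so \prettyref{prop:amenability}~\ref{randomwwbt} (applied to $\GG(G,H,\Sig)$, non-amenable by \prettyref{thm:h}) already yields the bound you need; be careful, however, that your parenthetical justification via $\Abs{R}<1$ is imprecise, since the non-backtracking walk is not governed by the operator $R_\GG$ but by the (non-self-adjoint) Hashimoto operator on directed edges, so a literal Cauchy--Schwarz argument does not apply verbatim. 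The paper instead sidesteps hitting probabilities altogether: it labels only reduced words $W'$ representing elements of $H$ (return to the origin), obtains $W'$ by freely reducing $Q_avP_a$ (which requires $\abs v\geq 2c$, whence the separate labelling of all reduced words of length at most $4c$), and recovers $av$ from $W'$ together with $\Oh(1)$ extra data, so that strong negligibility is preserved under bounded-to-one preimages. In effect the paper's recovery trick is a combinatorial proof of your hitting bound for these particular bounded-distance pairs; your route is slightly more direct given the all-pairs formulation of condition \ref{randomwwbt}, while the paper's route needs only the return probability at the origin, at the cost of the length-$4c$ case distinction.
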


\begin{proof}
Again we label words in $\Del'$ such that at the end all elliptic words are labeled and that the set of words with label is strongly negligible.

 We start to label all reduced words in $\Sig^*$ which have length at most $4c$. Next, for each $(P_{a},Q_{a},a)\in \cL$ we consider all words
 of the form $W=Q_{a}v P_{a}$ where, first, $av$ is reduced, 
 second, $\abs v \geq 2c$, and third, 
 $Q_{a}v P_{a}$ represents an element in $H$. In particular, $W$ is elliptic.
 Since $v$ is reduced and long enough, we obtain a reduced word 
 $W' = Q_{a}'u P_{a}'$ such that $W=_{G} W'$, 
 $Q_{a}'$ is a prefix of $Q_{a}$, and $P_{a}'$ is a suffix of $P_{a}$ (we assumed $P_a$, $Q_a$ to be reduced).
 We label $W'$. It represents an element in $H$. Thus, the set of reduced words 
 with a label is strongly negligible in $\Del'$ by \prref{prop:amenability} \ref{randomwwbt}. 
 
 In the next round we label all words $av$ where there exists 
 some $(P_{a},Q_{a},a)\in \cL$ such that $Q_{a}v P_{a}$ reduces to 
 some reduced word 
 $W'$ which is labeled. Now, we can recover $av$ if we know the following four data:
 \begin{align}\label{eq:data} 
W',\;(P_{a},Q_{a},a)\in \cL, \;\abs{Q_{a}'}, \text{ and } \abs{P_{a}'}. 
\end{align}
Thus, each labeled $W'$ can produce at most $c\abs{\Sig}(c+1)^2 \in \Oh(1)$ new labels on words of length between $\abs{W'} -2c$ and  $\abs{W'} + 2c +1$. Hence, the set of reduced words with label is still 
strongly negligible. Note that now all words representing elements of $H$ are labeled.

 Finally, as above, we label all words $w_{2}w_{1}$ where $w_{1}w_{2}$
has a label; and the set of reduced words with label remains
strongly negligible. 

It remains to show that all cyclically reduced elliptic words are labeled. This is true for all words of length at most $4c$. 
Hence, 
let $n > 4c$ and $w=a_{1}\cdots a_{n}$ be a cyclically reduced word with $a_{i} \in \Sig$ such that 
$w$ is conjugate to some element in $H$. As above we switch to the corresponding word $w'=u_{a_{1}}\cdots u_{a_{n}} \in \Sig'^*$. By Collins' Lemma, we can factorize 
$w'=w'_{1}w'_{2}$ such that $w'_{2}w'_{1}\in_G H$. 
Again, we find some index $i$ and $(P_{a_{i}},Q_{a_{i}},a_{i})\in \cL$
together with a factorization $w'_{2}w'_{1}= q_{a_{i}}u_{a_{i+1}} \cdots u_{a_{n}} u_{a_{1}} \cdots u_{a_{i-1}}p_{a_{i}}$ and $u_{a_{i}}
= p_{a_{i}}q_{a_{i}}$.
Now, since $w$ is cyclically reduced, the 
word $a_{i}v = a_{i}a_{i+1} \cdots{a_{n}} {a_{1}} \cdots {a_{i-1}}$ is reduced. Moreover,
 $$w'_{2}w'_{1}=_{G} Q_{a_{i}}{a_{i+1}} \cdots{a_{n}} {a_{1}} \cdots {a_{i-1}}P_{a_{i}} = Q_{a_{i}}v P_{a_{i}}\in_G H.$$
According to our procedure the reduced word $a_{i}v$ has got a label. Finally, $w$ is a transposition of 
$a_{i}v$. Hence, latest in the last step, $w$ got a label. 
\end{proof}


\end{document}